\newtheorem{theorem}{Theorem}[section]
\newtheorem{proposition}[theorem]{Proposition}
\theoremstyle{definition}
\newtheorem{definition}[theorem]{Definition}
\newtheorem{remark}[theorem]{Remark}
\newtheorem{example}[theorem]{Example}
\theoremstyle{remark}
\numberwithin{equation}{section}
\begin{document}

\title[Differential smoothness of bi-quadratic algebras]{Differential smoothness of \\ bi-quadratic algebras with PBW basis}


\author{Andr\'es Rubiano}
\address{Universidad Nacional de Colombia - Sede Bogot\'a}
\curraddr{Campus Universitario}
\email{arubianos@unal.edu.co}
\address{Universidad ECCI}
\curraddr{Campus Universitario}
\email{arubianos@ecci.edu.co}
\thanks{}


\author{Armando Reyes}
\address{Universidad Nacional de Colombia - Sede Bogot\'a}
\curraddr{Campus Universitario}
\email{mareyesv@unal.edu.co}

\subjclass[2020]{16E45, 16S30, 16S32, 16S36, 16S38, 16S99, 16T05, 58B34}

\keywords{Differentially smooth algebra, integrable calculus, skew polynomial ring, generalized Weyl algebra, diskew polynomial ring, bi-quadratic algebra}

\date{}

\dedicatory{Dedicated to the memory of our dear Professor Mikhail Malakhaltsev}

\begin{abstract} 

We investigate the differential smoothness of bi-quadratic algebras with PBW basis.
\end{abstract}

\maketitle


\section{Introduction}

The {\em theory of connections} in noncommutative geometry is a well-established framework (see Connes \cite{Connes1994} and Giachetta et al. \cite{Giachettaetal2005}). This theory considers a differential graded algebra $\Omega A = \bigoplus_{n = 0} \Omega^{n} A$ over a $\Bbbk$-algebra $A = \Omega^{0} A$ with $\Bbbk$ a field. A {\em connection} on a left $A$-module $M$ is defined as a $\Bbbk$-linear map $\nabla^{0} : M \to \Omega^{1} A \otimes_A M$ satisfying the Leibniz rule: $\nabla^0(am) = da \otimes_A m + a\nabla^{0}(m)$ for all $a \in A$ and $m \in M$. This formulation generalizes the classical notion of connections on smooth manifolds—where commutative algebras of functions and vector bundles are central—by replacing them with noncommutative algebras and their (typically one-sided) modules. As Brzezi{\'n}ski remarked, \textquotedblleft this captures very well the classical context in which connections appear and brings it successfully to the realm of noncommutative geometry\textquotedblright\ \cite[p. 557]{Brzezinski2008}.

Brzezi{\'n}ski also pointed out that, from an algebraic standpoint, the above definition reveals only part of a broader picture. First, since noncommutative connections are built using the tensor functor, and given that this functor admits a right adjoint—the {\em hom-functor}—it is natural to explore whether one can define connection-like structures via this adjoint. Second, the dual of $M$ as a vector space becomes a right $A$-module, and a left connection (as defined above) does not induce a corresponding right connection on $M^*$. Considering the adjunction between the tensor and hom functors, any such induced structure would necessarily involve the hom-functor.

Brzezi{\'n}ski \cite{Brzezinski2008} introduced a class of connection-like maps defined on spaces of module homomorphisms, which he called {\em hom-connections} (also referred to as {\em divergences}, since when $A$ is an algebra of functions on $\mathbb{R}^n$ and $\Omega^1(A)$ is the standard module of 1-forms, the classical divergence operator is recovered \cite[p. 892]{Brzezinski2011}). Notably, he proved that hom-connections naturally emerge from (strong) connections on {\em noncommutative principal bundles}, and that any left connection on a bimodule (in the sense of Cuntz and Quillen \cite{CuntzQuillen1995}) gives rise to a hom-connection.

Brzezi{\'n}ski further analyzed how hom-connections can be induced through differentiable bimodules and morphisms of differential graded algebras. He showed that these structures extend to {\em higher-order forms}, and defined a notion of {\em curvature} for hom-connections. Repeated application of a hom-connection yields an operator governed by this curvature, resulting in a chain complex. When the curvature vanishes (i.e., the hom-connection is {\em flat}), this leads to a homology theory that may be viewed as the dual of the cochain complex associated to ordinary connections, central in the theory of noncommutative differential fibrations \cite{BeggsBrzezinski2005}.

Two years later, Brzezi{\'n}ski et al. \cite{BrzezinskiElKaoutitLomp2010} introduced a method for constructing a {\em differential calculi} that accommodate hom-connections. This method employs {\em twisted multi-derivations}, yielding first-order calculi $\Omega^1(A)$ that is free as both left and right $A$-module. From the geometrical point of view, $\Omega^1(A)$ models the module of sections of the cotangent bundle over a manifold described by $A$, and thus their construction corresponds to parallelizable manifolds or algebras of functions on local charts. Subsequently, Brzezi{\'n}ski observed that \textquotedblleft one should expect $\Omega^1(A)$ to be a finitely generated and projective module over $A$ (thus corresponding to sections of a non-trivial vector bundle by the Serre–Swan theorem)\textquotedblright\ \cite[p. 885]{Brzezinski2011}. In the same work, he generalized the construction in \cite{BrzezinskiElKaoutitLomp2010} to include finitely generated and projective modules.

In connection with differential calculi, there is also the concept of {\em smoothness of algebras}. According to Brzezi\'nski and Lomp \cite[Section 1]{BrzezinskiLomp2018}, the origins of this notion can be traced back to Grothendieck’s EGA \cite{Grothendieck1964}, where the idea of {\em formally smooth} commutative (or topological) algebras was introduced. Schelter \cite{Schelter1986} extended this idea to noncommutative algebras by defining formal smoothness in terms of the projectivity of the kernel of the multiplication map as a bimodule. This refinement replaced earlier, overly broad notions based solely on global dimension. In this vein, Cuntz and Quillen \cite{CuntzQuillen1995} referred to such algebras as {\em quasi-free}. Stafford and Zhang \cite{StaffordZhang1994} provided a more precise criterion: a Noetherian algebra is {\em smooth} if it has finite global dimension equal to the homological dimension of all its simple modules. Van den Bergh \cite{VandenBergh1998}, from a homological perspective, termed an algebra {\em homologically smooth} when it admits a finite projective bimodule resolution. This type of smoothness has been examined in the context of noncommutative spaces such as the noncommutative pillow, quantum teardrops, and quantum homogeneous spaces by Brzezi{\'n}ski \cite{Brzezinski2008, Brzezinski2014} and Kr\"ahmer \cite{Krahmer2012}.

An alternative notion, called {\em differential smoothness}, was later introduced by Brzezi{\'n}ski and Sitarz \cite{BrzezinskiSitarz2017}. This definition is based on differential graded algebras of fixed dimension that support a noncommutative analogue of the Hodge star isomorphism. It relies on the existence of a top-degree form and a version of Poincaré duality realized as an isomorphism between complexes of differential and integral forms. In contrast to homological smoothness, this notion is constructive and geometric. As they explained, \textquotedblleft the idea behind the {\em differential smoothness} of algebras is rooted in the observation that a classical smooth orientable manifold, in addition to the de Rham complex of differential forms, admits also the complex of {\em integral forms} isomorphic to the de Rham complex \cite[Section 4.5]{Manin1997}. The de Rham differential can be understood as a special left connection, while the boundary operator in the complex of integral forms is an example of a {\em right connection}\textquotedblright\ \cite[p. 413]{BrzezinskiSitarz2017}.

Brzezi{\'n}ski and various authors (e.g., \cite{Brzezinski2015, Brzezinski2016, BrzezinskiElKaoutitLomp2010, BrzezinskiLomp2018, BrzezinskiSitarz2017, DuboisViolette1988, DuboisVioletteKernerMadore1990, Karacuha2015, KaracuhaLomp2014, ReyesSarmiento2022}) have contributed to the study of differential smoothness for numerous algebras, such as the quantum two - and three-spheres, the quantum plane and disc, the noncommutative torus, the coordinate algebras of the quantum group $SU_q(2)$, the noncommutative pillow algebra, quantum cones, quantum polynomial algebras, certain Hopf algebra domains of Gelfand–Kirillov dimension two, various families of Ore extensions, 3-dimensional skew polynomial algebras, diffusion algebras, and deformations of classical orbifold coordinate algebras (including pillow orbifolds, singular cones, and lens spaces). Interestingly, several of these algebras are also homologically smooth in Van den Bergh’s sense.

Recently, the authors provided a characterization of the differential smoothness of different noncommutative algebras such as bi-quadratic algebras in three variables \cite{RubianoReyes2024DSBiquadraticAlgebras}, double Ore extensions \cite{RubianoReyes2024DSDoubleOreExtensions}, diffusion algebras \cite{RubianoReyes2024DSDifA}, and skew PBW extensions \cite{RubianoReyes2024DSSPBWKt, RubianoReyes2024DSSBWR}. The purpose of this paper is to present a general characterization for biquadratic algebras in $n$ variables with PBW basis introduced by Bavula \cite{Bavula2023}, and hence to extend the results formulated in \cite[Theorems 3.2 and 3.3]{RubianoReyes2024DSBiquadraticAlgebras}. Our results can be applied to several kinds of noncommutative polynomial type algebras studied by some authors (Bell et al. \cite{BellGoodearl1988, BellSmith1990}, Bueso et al. \cite{BuesoTorrecillasVerschoren2003}, Hinchcliffe \cite{Hinchcliffe2005}, Levandovskyy \cite{Levandovskyy2005}, Lezama et al. \cite{Fajardoetal2020, GallegoLezama2010, LezamaReyes2014, ReyesSuarez2017}, McConnell and Robson \cite{McConnellRobson2001}, Pyatov and coauthors \cite{IsaevPyatovRittenberg2001, PyatovTwarock2002}, Redman \cite{RedmanPhDThesis1996, Redman1999}, Rosenberg \cite{Rosenberg1995}, and Seiler \cite{SeilerBook2010}).

The structure of the paper is as follows. Section \ref{PreliminariesDifferentialsmoothnessofbi-quadraticalgebras} introduces the foundational concepts concerning differential smoothness and the class of bi-quadratic algebras equipped with PBW basis. In Section \ref{Differentialandintegralcalculusbi-quadraticalgebras}, we present the main original results of this work. We generalize Brzezi{\'n}ski's ideas \cite{Brzezinski2015} developed for skew polynomial rings over the commutative polynomial algebra $\Bbbk[t]$ to the context of bi-quadratic algebras with PBW basis. Theorem \ref{Firsttheoremsmoothnessbi-quadraticalgebras} provides sufficient conditions under which such algebras are differentially smooth, while Theorem \ref{Secondtheoremsmoothnessbi-quadraticalgebras} offers sufficient conditions that preclude differential smoothness. Section \ref{Examplesdifferentiallysmoothngenerators} contains examples that illustrate both results. In particular, in Remark \ref{CQWABavula2024} we see that the $3$-{\em cyclic quantum Weyl algebras} $A(\alpha, \beta, \gamma)$ investigated by Bavula \cite{Bavula2024} are differentially smooth.

Throughout the paper, $\mathbb{N}$ denotes the set of natural numbers, including zero. The word ring means an associative ring with identity not necessarily commutative. $Z(R)$ denotes the center of the ring $R$. All vector spaces and algebras (always associative and with unit) are over a fixed field $\Bbbk$. $\Bbbk^{*}$ denotes the non-zero elements of $\Bbbk$. As usual, the symbols $\mathbb{R}$ and $\mathbb{C}$ denote the fields of real and complex numbers, respectively. 

\section{Preliminaries}\label{PreliminariesDifferentialsmoothnessofbi-quadraticalgebras}

We start by recalling the preliminaries on differential smoothness of algebras and bi-quadratic algebras with PBW basis.

\subsection{Differential smoothness of algebras}\label{DefinitionsandpreliminariesDSA}





\begin{definition}[{\cite[Section 2.1]{BrzezinskiSitarz2017}}]
\begin{enumerate}
    \item [\rm (i)] A {\em differential graded algebra} is a non-negatively graded algebra $\Omega$ with the product denoted by $\wedge$ together with a degree-one linear map $$
    d:\Omega^{\bullet} \to \Omega^{\bullet +1}
    $$ 
    
    that satisfies the graded Leibniz's rule and is such that $d \circ d = 0$. 
    
    \item [\rm (ii)] A differential graded algebra $(\Omega, d)$ is a {\em calculus over an algebra} $A$ if $\Omega^0 A = A$ and 
    $$
    \Omega^n A = A\ dA \wedge dA \wedge \dotsb \wedge dA
    $$ 
    
    ($dA$ appears $n$-times) for all $n\in \mathbb{N}$. This last is called the {\em density condition}. We write $(\Omega A, d)$ with 
    $$
    \Omega A = \bigoplus_{n\in \mathbb{N}} \Omega^{n}A. 
    $$
    
    By using the Leibniz's rule, it follows that 
    $$
    \Omega^n A = dA \wedge dA \wedge \dotsb \wedge dA\ A. 
    $$
    
    A differential calculus $\Omega A$ is said to be {\em connected} if ${\rm ker}(d\mid_{\Omega^0 A}) = \Bbbk$.
    
    \item [\rm (iii)] A calculus $(\Omega A, d)$ is said to have {\em dimension} $n$ if $\Omega^n A\neq 0$ and $\Omega^m A = 0$ for all $m > n$. An $n$-dimensional calculus $\Omega A$ {\em admits a volume form} if $\Omega^n A$ is isomorphic to $A$ as a left and right $A$-module. 
\end{enumerate}
\end{definition}

The existence of a right $A$-module isomorphism means that there is a free generator, say $\omega$, of $\Omega^n A$ (as a right $A$-module), i.e. $\omega \in \Omega^n A$, such that all elements of $\Omega^n A$ can be uniquely expressed as $\omega a$ with $a \in A$. If $\omega$ is also a free generator of $\Omega^n A$ as a left $A$-module, this is said to be a {\em volume form} on $\Omega A$.

The right $A$-module isomorphism $\Omega^n A \to A$ corresponding to a volume form $\omega$ is denoted by $\pi_{\omega}$, i.e.
\begin{equation}\label{BrzezinskiSitarz2017(2.1)}
\pi_{\omega} (\omega a) = a, \quad {\rm for\ all}\ a\in A.
\end{equation}

By using that $\Omega^n A$ is also isomorphic to $A$ as a left $A$-module, any free generator $\omega $ induces an algebra endomorphism $\nu_{\omega}$ of $A$ by the formula
\begin{equation}\label{BrzezinskiSitarz2017(2.2)}
    a \omega = \omega \nu_{\omega} (a).
\end{equation}

Note that if $\omega$ is a volume form, then $\nu_{\omega}$ is an algebra automorphism.

We proceed to recall the key ingredients of the {\em integral calculus} on $A$ as dual to its differential calculus. For more details, see Brzezinski et al. \cite{Brzezinski2008, BrzezinskiElKaoutitLomp2010}.

Let $(\Omega A, d)$ be a differential calculus on $A$. The space of $n$-forms $\Omega^n A$ is an $A$-bimodule. Consider $\mathcal{I}_{n}A$ the right dual of $\Omega^{n}A$, the space of all right $A$-linear maps $\Omega^{n}A\rightarrow A$, that is, 
$$
\mathcal{I}_{n}A := {\rm Hom}_{A}(\Omega^{n}(A),A).
$$

Notice that each of the $\mathcal{I}_{n}A$ is an $A$-bimodule with the actions
\begin{align*}
    (a\cdot\phi\cdot b)(\omega)=a\phi(b\omega),\quad {\rm for\ all}\ \phi \in \mathcal{I}_{n}A,\ \omega \in \Omega^{n}A\ {\rm and}\ a,b \in A.
\end{align*}

The direct sum of all the $\mathcal{I}_{n}A$, that is, $\mathcal{I}A = \bigoplus\limits_{n} \mathcal{I}_n A$, is a right $\Omega A$-module with action given by
\begin{align}\label{BrzezinskiSitarz2017(2.3)}
    (\phi\cdot\omega)(\omega')=\phi(\omega\wedge\omega'),\quad {\rm for\ all}\ \phi\in\mathcal{I}_{n + m}A, \ \omega\in \Omega^{n}A \ {\rm and} \ \omega' \in \Omega^{m}A.
\end{align}

\begin{definition}[{\cite[Definition 2.1]{Brzezinski2008}}]
A {\em divergence} (also called {\em hom-connection}) on $A$ is a linear map $\nabla: \mathcal{I}_1 A \to A$ such that
\begin{equation}\label{BrzezinskiSitarz2017(2.4)}
    \nabla(\phi \cdot a) = \nabla(\phi) a + \phi(da), \quad {\rm for\ all}\ \phi \in \mathcal{I}_1 A \ {\rm and} \ a \in A.
\end{equation}  
\end{definition}

Note that a divergence can be extended to the whole of $\mathcal{I}A$, 
\[
\nabla_n: \mathcal{I}_{n+1} A \to \mathcal{I}_{n} A,
\]

by considering
\begin{equation}\label{BrzezinskiSitarz2017(2.5)}
\nabla_n(\phi)(\omega) = \nabla(\phi \cdot \omega) + (-1)^{n+1} \phi(d \omega), \quad {\rm for\ all}\ \phi \in \mathcal{I}_{n+1}(A)\ {\rm and} \ \omega \in \Omega^n A.
\end{equation}

By putting together (\ref{BrzezinskiSitarz2017(2.4)}) and (\ref{BrzezinskiSitarz2017(2.5)}), we get the Leibniz's rule 
\begin{equation}
    \nabla_n(\phi \cdot \omega) = \nabla_{m + n}(\phi) \cdot \omega + (-1)^{m + n} \phi \cdot d\omega,
\end{equation}

for all elements $\phi \in \mathcal{I}_{m + n + 1} A$ and $\omega \in \Omega^m A$ \cite[Lemma 3.2]{Brzezinski2008}. In the case $n = 0$, if ${\rm Hom}_A(A, M)$ is canonically identified with $M$, then $\nabla_0$ reduces to the classical Leibniz's rule.

\begin{definition}[{\cite[Definition 3.4]{Brzezinski2008}}]
The right $A$-module map 
\[
F = \nabla_0 \circ \nabla_1: {\rm Hom}_A(\Omega^{2} A, M) \to M
\] 

is called a {\em curvature} of a hom-connection $(M, \nabla_0)$. $(M, \nabla_0)$ is said to be {\em flat} if its curvature is the zero map, that is, if $\nabla \circ \nabla_1 = 0$. This condition implies that $\nabla_n \circ \nabla_{n+1} = 0$ for all $n\in \mathbb{N}$.
\end{definition}

$\mathcal{I} A$ together with the $\nabla_n$ form a chain complex called the {\em complex of integral forms} over $A$. The cokernel map of $\nabla$, that is, $\Lambda: A \to {\rm Coker} \nabla = A / {\rm Im} \nabla$ is said to be the {\em integral on $A$ associated to} $\mathcal{I}A$.

Given a left $A$-module $X$ with action $a\cdot x$, for all $a\in A,\ x \in X$, and an algebra automorphism $\nu$ of $A$, the notation $^{\nu}X$ stands for $X$ with the $A$-module structure twisted by $\nu$, i.e. with the $A$-action $a\otimes x \mapsto \nu(a)\cdot x $.

The following definition of an \textit{integrable differential calculus} seeks to portray a version of Hodge star isomorphisms between the complex of differential forms of a differentiable manifold and a complex of dual modules of it \cite[p. 112]{Brzezinski2015}. 

\begin{definition}[{\cite[Definition 2.1]{BrzezinskiSitarz2017}}]
An $n$-dimensional differential calculus $(\Omega A, d)$ is said to be {\em integrable} if $(\Omega A, d)$ admits a complex of integral forms $(\mathcal{I}A, \nabla)$ for which there exist an algebra automorphism $\nu$ of $A$ and $A$-bimodule isomorphisms \linebreak $\Theta_k: \Omega^{k} A \to ^{\nu} \mathcal{I}_{n-k}A$, $k = 0, \dotsc, n$, rendering commmutative the following diagram:
\[
\begin{tikzcd}
A \arrow{r}{d} \arrow{d}{\Theta_0} & \Omega^{1} A \arrow{d}{\Theta_1} \arrow{r}{d} & \Omega^2 A  \arrow{d}{\Theta_2} \arrow{r}{d} & \dotsb \arrow{r}{d} & \Omega^{n-1} A \arrow{d}{\Theta_{n-1}} \arrow{r}{d} & \Omega^n A  \arrow{d}{\Theta_n} \\ ^{\nu} \mathcal{I}_n A \arrow[swap]{r}{\nabla_{n-1}} & ^{\nu} \mathcal{I}_{n-1} A \arrow[swap]{r}{\nabla_{n-2}} & ^{\nu} \mathcal{I}_{n-2} A \arrow[swap]{r}{\nabla_{n-3}} & \dotsb \arrow[swap]{r}{\nabla_{1}} & ^{\nu} \mathcal{I}_{1} A \arrow[swap]{r}{\nabla} & ^{\nu} A
\end{tikzcd}
\]

The $n$-form $\omega:= \Theta_n^{-1}(1)\in \Omega^n A$ is called an {\em integrating volume form}. 
\end{definition}

The algebra of complex matrices $M_n(\mathbb{C})$ with the $n$-dimensional calculus generated by derivations presented by Dubois-Violette et al. \cite{DuboisViolette1988, DuboisVioletteKernerMadore1990}, the quantum group $SU_q(2)$ with the three-dimensional left covariant calculus developed by Woronowicz \cite{Woronowicz1987} and the quantum standard sphere with the restriction of the above calculus, are examples of algebras admitting integrable calculi. For more details on the subject, see Brzezi\'nski et al. \cite{BrzezinskiElKaoutitLomp2010}. 

The following proposition shows that the integrability of a differential calculus can be defined without explicit reference to integral forms. This allows us to guarantee the integrability by considering the existence of finitely generator elements that allow to determine left and right components of any homogeneous element of $\Omega(A)$.

\begin{proposition}[{\cite[Theorem 2.2]{BrzezinskiSitarz2017}}]\label{integrableequiva} 
Let $(\Omega A, d)$ be an $n$-dimensional differential calculus over an algebra $A$. The following assertions are equivalent:
\begin{enumerate}
    \item [\rm (1)] $(\Omega A, d)$ is an integrable differential calculus.
    
    \item [\rm (2)] There exists an algebra automorphism $\nu$ of $A$ and $A$-bimodule isomorphisms 
    $$
    \Theta_k : \Omega^k A \rightarrow \ ^{\nu}\mathcal{I}_{n-k}A, \quad k = 0, \ldots, n, 
    $$ 
    
    such that, for all $\omega'\in \Omega^k A$ and $\omega''\in \Omega^mA$,
    \begin{align*}
        \Theta_{k+m}(\omega'\wedge\omega'')=(-1)^{(n-1)m}\Theta_k(\omega')\cdot\omega''.
    \end{align*}
    
    \item [\rm (3)] There exists an algebra automorphism $\nu$ of $A$ and an $A$-bimodule map $\vartheta:\Omega^nA\rightarrow\ ^{\nu}A$ such that all left multiplication maps
    \begin{align*}
    \ell_{\vartheta}^{k}:\Omega^k A &\ \rightarrow \mathcal{I}_{n-k}A, \\
    \omega' &\ \mapsto \vartheta\cdot\omega', \quad k = 0, 1, \dotsc, n,
    \end{align*}
    where the actions $\cdot$ are defined by {\rm (}\ref{BrzezinskiSitarz2017(2.3)}{\rm )}, are bijective.
    
    \item [\rm (4)] $(\Omega A, d)$ has a volume form $\omega$ such that all left multiplication maps
    \begin{align*}
        \ell_{\pi_{\omega}}^{k}:\Omega^k A &\ \rightarrow \mathcal{I}_{n-k}A, \\
        \omega' &\ \mapsto \pi_{\omega} \cdot \omega', \quad k=0,1, \dotsc, n-1,
    \end{align*}
    
    where $\pi_{\omega}$ is defined by {\rm (}\ref{BrzezinskiSitarz2017(2.1)}{\rm )}, are bijective.
\end{enumerate}
\end{proposition}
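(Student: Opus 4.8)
The plan is to establish the cycle of implications $(1)\Rightarrow(2)\Rightarrow(3)\Rightarrow(4)\Rightarrow(1)$, with the two arrows $(1)\Rightarrow(2)$ and $(4)\Rightarrow(1)$ carrying essentially all of the content. \emph{The elementary arrows $(2)\Rightarrow(3)\Rightarrow(4)$.} For $(2)\Rightarrow(3)$ I would take $\vartheta:=\Theta_n\colon\Omega^nA\to{}^{\nu}A$ after identifying $\mathcal{I}_0A\cong A$ via $\psi\mapsto\psi(1)$; specializing the multiplicativity relation of $(2)$ to $k=0$, $\omega'=1$ and using associativity of the right $\Omega A$-action on $\mathcal{I}A$ gives $\Theta_m=(-1)^{(n-1)m}\ell_\vartheta^{\,m}$ for every $m$, so each $\ell_\vartheta^{\,m}$ is bijective, and that $\vartheta$ is an $A$-bimodule map into the twisted module ${}^{\nu}A$ is a direct check against the bimodule structure of $\mathcal{I}_nA$. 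For $(3)\Rightarrow(4)$ one observes that $\ell_\vartheta^{\,n}$ is just $\vartheta$ under $\mathcal{I}_0A\cong A$, so $\vartheta$ is bijective; set $\omega:=\vartheta^{-1}(1)$. Right $A$-linearity of $\vartheta^{-1}$ forces every element of $\Omega^nA$ to be $\omega a$ for a unique $a$, and the left-linearity identity $\vartheta(a\xi)=\nu(a)\vartheta(\xi)$ transports to $a\omega=\omega\nu(a)$; hence $\omega$ is a volume form with $\pi_\omega=\vartheta$ and $\nu_\omega=\nu$, and the maps $\ell_{\pi_\omega}^{\,k}=\ell_\vartheta^{\,k}$, $0\le k\le n-1$, are bijective by hypothesis.

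\emph{The arrow $(1)\Rightarrow(2)$.} Here the isomorphisms $\Theta_k$, a divergence $\nabla$, and the commuting ladder $\Theta_{k+1}\circ d=\nabla_{n-k-1}\circ\Theta_k$ are all given. I would prove $\Theta_{k+m}(\omega'\wedge\omega'')=(-1)^{(n-1)m}\Theta_k(\omega')\cdot\omega''$ by induction on $m$, the case $m=0$ being immediate. For the inductive step the density condition together with $A$-bilinearity of all the maps reduces the claim to $\omega''=\eta\wedge da$ with $\eta\in\Omega^{m-1}A$ and $a\in A$; one rewrites $\omega'\wedge\eta\wedge da$ using the graded Leibniz rule for $d$, applies $\Theta_{k+m}$, replaces each $\Theta_{\bullet}\circ d$ by $\nabla\circ\Theta_{\bullet-1}$ via the ladder, expands with the Leibniz rule for the operators $\nabla_n$ (the identity displayed just after \eqref{BrzezinskiSitarz2017(2.5)}), and closes using the induction hypothesis applied to $\eta$. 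The whole step is a bookkeeping of signs.

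\emph{The arrow $(4)\Rightarrow(1)$: the crux.} Given a volume form $\omega$ with all $\ell_{\pi_\omega}^{\,k}$ bijective for $0\le k\le n-1$ (the case $k=n$ being automatic, since $\ell_{\pi_\omega}^{\,n}\cong\pi_\omega$), I would set $\nu:=\nu_\omega$ and $\Theta_k:=(-1)^{(n-1)(n-k)}\ell_{\pi_\omega}^{\,k}\colon\Omega^kA\to{}^{\nu}\mathcal{I}_{n-k}A$; these are $A$-bimodule isomorphisms (right linearity from $(\phi\cdot b)(\zeta)=\phi(b\zeta)$, left linearity from the twist by $\nu$) and they satisfy the multiplicativity relation of $(2)$ by associativity of the $\Omega A$-action, the signs having been chosen precisely so that what follows is consistent. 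One then \emph{defines} the hom-connection by $\nabla:=\Theta_n\circ d\circ\Theta_{n-1}^{-1}\colon\mathcal{I}_1A\to\mathcal{I}_0A\cong A$ and verifies it is a divergence: writing $\phi=\Theta_{n-1}(\eta)$, so that $\phi\cdot a=\Theta_{n-1}(\eta a)$, and expanding $d(\eta a)$ by the Leibniz rule for $d$, the terms regroup into $\nabla(\phi\cdot a)=\nabla(\phi)a+\phi(da)$ as in \eqref{BrzezinskiSitarz2017(2.4)}, the decisive point being that $\Theta_n(\eta\wedge da)$ and $\phi(da)$ differ only by the very sign built into $\Theta_{n-1}$, so the discrepancy cancels. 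It then remains to identify the operators obtained from $\nabla$ through \eqref{BrzezinskiSitarz2017(2.5)} with $\Theta_{n-j}\circ d\circ\Theta_{n-1-j}^{-1}$ for all $j$, which is again an induction on $j$ using the two Leibniz rules (for $d$ and for the $\nabla_n$) and the multiplicativity of the $\Theta_k$. Once this holds, the ladder commutes by construction, $(\mathcal{I}A,\nabla)$ is a complex of integral forms, and $(\Omega A,d)$ is integrable with integrating volume form $\omega=\Theta_n^{-1}(1)$.

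\emph{Main obstacle.} The only genuinely delicate part is $(4)\Rightarrow(1)$: out of the purely ``pointwise'' data of a volume form plus bijectivity of a family of left-multiplication maps, one must manufacture an actual divergence, prove it obeys the Leibniz rule of a hom-connection, and show that its canonical extension to higher integral forms is intertwined with $d$ by the $\Theta_k$. Pinning down the internal sign conventions so that the $\Theta_k$ are multiplicative \emph{and} the constructed $\nabla$ is a divergence at the same time, and carrying the compatibility of the extensions through the induction, is where essentially all of the care is needed; everything else is either formal or a matter of organizing signs.
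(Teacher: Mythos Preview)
The paper does not contain a proof of this proposition: it is quoted verbatim as a preliminary result from \cite[Theorem 2.2]{BrzezinskiSitarz2017}, and the paper simply uses it as a black box (the only follow-up is the remark that a volume form is an integrating form precisely when it satisfies condition (4)). So there is no ``paper's own proof'' to compare your proposal against.

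That said, your outline is the natural one and matches the argument in the original source \cite{BrzezinskiSitarz2017}. The cycle $(1)\Rightarrow(2)\Rightarrow(3)\Rightarrow(4)\Rightarrow(1)$ with the weight concentrated in $(1)\Rightarrow(2)$ (induction on $m$ via density and the two Leibniz rules) and $(4)\Rightarrow(1)$ (manufacturing the divergence as $\Theta_n\circ d\circ\Theta_{n-1}^{-1}$ and checking the hom-connection Leibniz rule and the ladder by induction) is exactly how Brzezi\'nski and Sitarz do it, and your sign conventions and identifications are set up correctly.
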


A volume form $\omega\in \Omega^nA$ is an {\em integrating form} if and only if it satisfies condition $(4)$ of Proposition \ref{integrableequiva} \cite[Remark 2.3]{BrzezinskiSitarz2017}.

The most interesting cases of differential calculi are those where $\Omega^k A$ are finitely generated and projective right or left (or both) $A$-modules \cite{Brzezinski2011}.

\begin{proposition}\label{BrzezinskiSitarz2017Lemmas2.6and2.7}
\begin{enumerate}
\item [\rm (1)] \cite[Lemma 2.6]{BrzezinskiSitarz2017} Consider $(\Omega A, d)$ an integrable and $n$-dimensional calculus over $A$ with integrating form $\omega$. Then $\Omega^{k} A$ is a finitely generated projective right $A$-module if there exist a finite number of forms $\omega_i \in \Omega^{k} A$ and $\overline{\omega}_i \in \Omega^{n-k} A$ such that, for all $\omega' \in \Omega^{k} A$, we have that 
\begin{equation*}
\omega' = \sum_{i} \omega_i \pi_{\omega} (\overline{\omega}_i \wedge \omega').
\end{equation*}

\item [\rm (2)] \cite[Lemma 2.7]{BrzezinskiSitarz2017} Let $(\Omega A, d)$ be an $n$-dimensional calculus over $A$ admitting a volume form $\omega$. Assume that for all $k = 1, \ldots, n-1$, there exists a finite number of forms $\omega_{i}^{k},\overline{\omega}_{i}^{k} \in \Omega^{k}(A)$ such that for all $\omega'\in \Omega^kA$, we have that
\begin{equation*}
\omega'=\displaystyle\sum_i\omega_{i}^{k}\pi_\omega(\overline{\omega}_{i}^{n-k}\wedge\omega')=\displaystyle\sum_i\nu_{\omega}^{-1}(\pi_\omega(\omega'\wedge\omega_{i}^{n-k}))\overline{\omega}_{i}^{k},
\end{equation*}

where $\pi_{\omega}$ and $\nu_{\omega}$ are defined by {\rm (}\ref{BrzezinskiSitarz2017(2.1)}{\rm )} and {\rm (}\ref{BrzezinskiSitarz2017(2.2)}{\rm )}, respectively. Then $\omega$ is an integral form and all the $\Omega^{k}A$ are finitely generated and projective as left and right $A$-modules.
\end{enumerate}
\end{proposition}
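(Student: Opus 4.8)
The plan is to obtain both parts from the dual basis lemma, with the only substantive point being that in part (2) one must first certify $\omega$ as an integrating form, which I would do through condition (4) of Proposition \ref{integrableequiva}.

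For part (1), I would recall that a right $A$-module $M$ is finitely generated and projective exactly when there are finitely many $m_i \in M$ and right $A$-linear maps $\phi_i \in {\rm Hom}_A(M,A)$ with $m = \sum_i m_i\,\phi_i(m)$ for every $m$. I would apply this with $M = \Omega^k A$, $m_i = \omega_i$, and $\phi_i := \pi_\omega(\overline{\omega}_i \wedge -)$. Because $\overline{\omega}_i \wedge \omega' \in \Omega^n A$ and $\pi_\omega$ is the right $A$-linear isomorphism of \eqref{BrzezinskiSitarz2017(2.1)}, associativity of $\wedge$ and compatibility with the right action give $\phi_i(\omega' a) = \phi_i(\omega')\,a$, so each $\phi_i$ lies in ${\rm Hom}_A(\Omega^k A, A)$; the displayed hypothesis is then precisely the dual basis identity, and the conclusion is immediate.

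For part (2), the first task is to show $\omega$ is an integrating form, for which — by Proposition \ref{integrableequiva}(4) — it suffices to prove each left multiplication map $\ell^k_{\pi_\omega}\colon \Omega^k A \to \mathcal{I}_{n-k}A$, $\omega' \mapsto \pi_\omega(\omega'\wedge -)$, is bijective for $k = 0, \dots, n-1$. The one computational ingredient is the twist relation $\pi_\omega(a\xi) = \nu_\omega(a)\,\pi_\omega(\xi)$ for $\xi \in \Omega^n A$, which I would extract from \eqref{BrzezinskiSitarz2017(2.1)} and \eqref{BrzezinskiSitarz2017(2.2)} by writing $\xi = \omega b$; the case $k = 0$ then reduces to $\nu_\omega$ being an automorphism. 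Injectivity of $\ell^k_{\pi_\omega}$ for $1 \le k \le n-1$ follows from the second displayed equality: if $\pi_\omega(\omega'\wedge -)$ vanishes it forces $\omega' = \sum_i \nu_\omega^{-1}(0)\,\overline{\omega}_i^k = 0$. For surjectivity, given $\phi \in \mathcal{I}_{n-k}A$ I would set $\omega' := \sum_i \nu_\omega^{-1}(\phi(\omega_i^{n-k}))\,\overline{\omega}_i^k$ and verify $\pi_\omega(\omega'\wedge\omega'') = \phi(\omega'')$ by pushing $\phi$ out with the twist relation and then applying the first displayed equality with the roles of $k$ and $n-k$ interchanged. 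Once $\omega$ is known to be an integrating form, the right-module projectivity of each $\Omega^k A$ is exactly part (1); for left-module projectivity I would read the second displayed equality as a left dual basis $\omega' = \sum_i g_i(\omega')\,\overline{\omega}_i^k$ with $g_i := \nu_\omega^{-1}(\pi_\omega(-\wedge\omega_i^{n-k}))$, noting that the same twist relation makes each $g_i$ left $A$-linear, while $k \in \{0,n\}$ give free modules of rank one.

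I expect the main obstacle to be bookkeeping rather than anything conceptual: the two displayed identities must be invoked with the indices $k$ and $n-k$ in precisely the right slots, and every occurrence of $\nu_\omega$ or $\nu_\omega^{-1}$ in the surjectivity computation has to be tracked carefully, since the coordinate maps are required to be right $A$-linear in the right-module argument and left $A$-linear in the left-module argument. Beyond that the proof is entirely formal.
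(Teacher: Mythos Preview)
The paper does not supply its own proof of this proposition: both parts are quoted verbatim as \cite[Lemma~2.6]{BrzezinskiSitarz2017} and \cite[Lemma~2.7]{BrzezinskiSitarz2017}, with no argument given, so there is nothing in the present paper to compare your attempt against. Your outline is the standard one and is correct: part~(1) is an immediate instance of the dual basis lemma, and in part~(2) the injectivity and surjectivity computations for $\ell^{k}_{\pi_{\omega}}$ via the two displayed identities (together with the twist relation $\pi_{\omega}(a\xi)=\nu_{\omega}(a)\pi_{\omega}(\xi)$) go through exactly as you describe, after which the projectivity statements reduce to dual bases on the right and left. This is essentially how the result is proved in the original Brzezi\'nski--Sitarz reference.
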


Brzezi\'nski and Sitarz \cite[p. 421]{BrzezinskiSitarz2017} asserted that to connect the integrability of the differential graded algebra $(\Omega A, d)$ with the algebra $A$, it is necessary to relate the dimension of the differential calculus $\Omega A$ with that of $A$, and since we are dealing with algebras that are deformations of coordinate algebras of affine varieties, the {\em Gelfand-Kirillov dimension} introduced by Gelfand and Kirillov \cite{GelfandKirillov1966, GelfandKirillov1966b} seems to be the best suited. Briefly, given an affine $\Bbbk$-algebra $A$, the {\em Gelfand-Kirillov dimension of} $A$, denoted by ${\rm GKdim}(A)$, is given by
\[
{\rm GKdim}(A) := \underset{n\to \infty}{\rm lim\ sup} \frac{{\rm log}({\rm dim}\ V^{n})}{{\rm log}\ n},
\]

where $V$ denotes a finite-dimensional subspace of $A$ that generates $A$ as an algebra. This definition does not depend on the particular choice of $V$. If $A$ is not affine, its Gelfand-Kirillov dimension is defined as the supremum of the Gelfand-Kirillov dimensions of all affine subalgebras of $A$. An affine domain with Gelfand-Kirillov dimension zero is exactly a division ring that is finite-dimensional over its center. In the affine case with Gelfand-Kirillov dimension one over $\Bbbk$, the algebra is a finite module over its center, and hence satisfies a polynomial identity. In a broad sense, this dimension quantifies how far the algebra $A$ is from being finite-dimensional. For further details on this notion, we refer the reader to the comprehensive exposition by Krause and Lenagan \cite{KrauseLenagan2000}.

We arrive to the key notion of this paper.

\begin{definition}[{\cite[Definition 2.4]{BrzezinskiSitarz2017}}]\label{BrzezinskiSitarz2017Definition2.4}
An affine algebra $A$ with integer Gelfand-Kirillov dimension $n$ is said to be {\em differentially smooth} if it admits an $n$-dimensional connected integrable differential calculus $(\Omega A, d)$.
\end{definition}

From Definition \ref{BrzezinskiSitarz2017Definition2.4} it follows that a differentially smooth algebra comes equipped with a well-behaved differential structure and with the precise concept of integration \cite[p. 2414]{BrzezinskiLomp2018}.

\begin{example}\label{examplesDSalgebrasBre}
As we said in the Introduction, several examples of noncommutative algebras have been proved to be differentially smooth (e.g. \cite{Brzezinski2015, BrzezinskiElKaoutitLomp2010, BrzezinskiLomp2018, BrzezinskiSitarz2017, Karacuha2015, KaracuhaLomp2014, ReyesSarmiento2022}). For instance, the polynomial algebra $\Bbbk[x_1, \dotsc, x_n]$ has the Gelfand-Kirillov dimension $n$ and the usual exterior algebra is an $n$-dimensional integrable calculus, whence $\Bbbk[x_1, \dotsc, x_n]$ is differentially smooth. From \cite{BrzezinskiElKaoutitLomp2010} we know that the coordinate algebras of the quantum group $SU_q(2)$, the standard quantum Podle\'s and the quantum Manin plane are differentially smooth. Of course, there are examples of algebras that are not differentially smooth. Consider the commutative algebra $A = \mathbb{C}[x, y] / \langle xy \rangle$. A proof by contradiction shows that for this algebra there are no one-dimensional connected integrable calculi over $A$, so it cannot be differentially smooth \cite[Example 2.5]{BrzezinskiSitarz2017}.
\end{example}

\subsection{Bi-quadratic algebras with PBW basis}\label{BiquadraticalgebrasPBWbasis}

For a natural number $n\ge 2$, a family $M = (m_{ij})_{i > j}$ of elements $m_{ij}$ belonging to $R$ ($1\le j < i \le n$) is called a {\em lower triangular half-matrix} with coefficients in $R$. The set of all such matrices is denoted by $L_n(R)$.

\begin{definition}[{\cite[Section 1]{Bavula2023}}]
If $\sigma = (\sigma_1, \dotsc, \sigma_n)$ is an $n$-tuple of commuting endomorphisms of $R$, $\delta = (\delta_1, \dotsc, \delta_n)$ is an $n$-tuple of $\sigma$-endomorphisms of $R$ (that is, $\delta_i$ is a $\sigma_i$-derivation of $R$ for $i=1,\dotsc, n$), $Q = (q_{ij})\in L_n(Z(R))$, $\mathbb{A}:= (a_{ij, k})$ where $a_{ij, k}\in R$, $1\le j < i \le n$ and $k = 1,\dotsc, n$, and $\mathbb{B}:= (b_{ij})\in L_n(R)$, the {\em skew bi-quadratic algebra} ({\em SBQA}) $A = R[x_1,\dotsc, x_n;\sigma, \delta, Q, \mathbb{A}, \mathbb{B}]$ is a ring generated by the ring $R$ and elements $x_1, \dotsc, x_n$ subject to the defining relations
\begin{align}
    x_ir = &\ \sigma_i(r)x_i + \delta_i(r),\quad {\rm for}\ i = 1, \dotsc, n,\ {\rm and\ every}\ r\in R, \label{Bavula2023(1)} \\
    x_ix_j - q_{ij}x_jx_i = &\ \sum_{k=1}^{n} a_{ij, k}x_k + b_{ij},\quad {\rm for\ all}\ j < i.\label{Bavula2023(2)}
\end{align}

If $\sigma_i = {\rm id}_R$ and $\delta_i = 0$ for $i = 1,\dotsc, n$, the ring $A$ is called the {\em bi-quadratic algebra} ({\em BQA}) and is denoted by $A = R[x_1, \dotsc, x_n; Q, \mathbb{A}, \mathbb{B}]$. $A$ has {\em PBW basis} if 
$$
A = \bigoplus\limits_{\alpha \in \mathbb{N}^{n}} Rx^{\alpha}
$$ 

where $x^{\alpha} = x_1^{\alpha_1}\dotsb x_n^{\alpha_n}$.
\end{definition}

On the set $W_n$ of all words in the alphabet $\{x_1, \dotsc, x_n\}$, Bavula considered the {\em degree-by-lexicographic ordering} where $x_1 < \dotsb < x_n$. More exactly, $x_{i_1} \dotsb x_{i_s} < x_{j_1} \dotsb x_{j_t}$ if either $s < t$ or $s = t, \ i_1 = j_1, \dotsc, i_k = j_k$ and $i_{k+1} < j_{k+1}$ for some $k$ such that $1 \le k < s$. Hence, if $A = R[x_1, \dotsc, x_n; Q, \mathbb{A}, \mathbb{B}]$ is a quadratic algebra where $n\ge 3$, then for each triple $i, j, k \in \{1, \dotsc, n\}$ such that $ i < j < k$, there are exactly two different ways to simplify the product $x_kx_jx_i$ with respect to this order given by
\begin{align*}
    x_k x_j x_i = &\ q_{kj} q_{ki} q_{ji} x_i x_j x_k + \sum_{|\alpha| \le 2} c_{k, j, i, \alpha} x^{\alpha}, \\
    x_k x_j x_i = &\ q_{kj} q_{ki} q_{ji} x_i x_j x_k + \sum_{|\alpha| \le 2} c'_{k, j, i, \alpha} x^{\alpha},
\end{align*}

where in the first (resp. second) equality we start to simplify the product with the relation $x_k x_j = q_{kj} x_j x_k + \dotsb$ (resp. $x_j x_i = q_{ji} x_i x_j + \dotsb$) \cite[p. 696]{Bavula2023}. Thus, the defining relations (\ref{Bavula2023(2)}) are consistent (i.e. $A\neq 0$) and $A = \bigoplus\limits_{\alpha \in \mathbb{N}^n} Rx^{\alpha}$ if and only if for all triples $i, j, k \in \{1, \dotsc, n\}$ such that $i < j < k$, we have that $ c_{k, j, i, \alpha} = c'_{k, j, i, \alpha}$. If this is the case, then for all $\sigma \in S_n$ ($S_n$ denotes the {\em symmetric group of order $n$}) we have that $A = \bigoplus\limits_{\alpha \in \mathbb{N}^n} Rx_{\sigma}^{\alpha}$ where $x_{\sigma}^{\alpha} = x_{\sigma(1)}^{\alpha_1} \dotsb x_{\sigma(n)}^{\alpha_n}$ and $\alpha = (\alpha_1, \dotsc, \alpha_n)$ \cite[Theorem 1.1]{Bavula2023}.

\begin{proposition}\label{BiquadraticalgebrasPBWbasisTwogenerators}
The classification of bi-quadratic algebras on two generators over a field $\Bbbk$ is as follows: if $q\in \Bbbk^{*}$ and $a, b, c \in \Bbbk$, then the algebra
\[
A = \Bbbk[x_1, x_2; q, a, b, c] := \Bbbk\{ x_1, x_2\} / \langle x_2 x_1 - qx_1x_2 - ax_1 - bx_2 - c\rangle
\]

is a bi-quadratic algebra on two generators \cite[p. 704]{Bavula2023}. The algebra $A = \Bbbk[x_1][x_2; \sigma, \delta]$ is a {\em skew polynomial ring} or {\em Ore extension} {\rm (}introduced by Ore \cite{Ore1933}{\rm )} of $\Bbbk[x_1]$ where $\sigma(x_1) = qx_1 + b$ and $\delta(x_1) = ax_1 + c$. In this way, the algebra $A$ is a Noetherian domain with PBW basis, $A = \bigoplus\limits_{i, j \in \mathbb{N}} \Bbbk x_1^{i} x_2^{j}$ and the scalars $a, b, c$ are arbitrary.

Up to isomorphism, there are only five bi-quadratic algebras of $\Bbbk$ on two generators \cite[Theorem 2.1]{Bavula2023}: 
    \begin{enumerate}
        \item [\rm (1)] The {\em polynomial algebra} $\Bbbk[x_1, x_2]$;
        
        \item [\rm (2)] The {\em Weyl algebra} $A_1(\Bbbk) = \Bbbk\{x_1, x_2\} / \langle x_1x_2 - x_2x_1 - 1\rangle$;
        
        \item [\rm (3)] The {\em universal enveloping algebra of the Lie algebra} $\mathfrak{n}_2 = \langle x_1, x_2\mid [x_2, x_1] = x_1\rangle$, that is, $U(\mathfrak{n}_2) = \Bbbk\{x_1, x_2\} / \langle x_2x_1 - x_1x_2 - x_1\rangle$;
        
        \item [\rm (4)] The {\em quantum plane} ({\em Manin's plane}) $\mathcal{O}_q(\Bbbk) = \Bbbk \{x_1, x_2\} / \langle x_2 x_1 - qx_1 x_2\rangle$, where $q\in \Bbbk\ \backslash\ \{0,1\}$;
        
        \item [\rm (5)] The {\em quantum Weyl algebra} $A_{1}^{q}(\Bbbk) = \Bbbk \{x_1, x_2\} / \langle x_2x_1 - qx_1x_2 - 1\rangle$, where $q\in \Bbbk\ \backslash\ \{0,1\}$.
    \end{enumerate}
\end{proposition}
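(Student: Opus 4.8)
The plan is to separate the statement into the structural claims about a single algebra $A = \Bbbk[x_1,x_2;q,a,b,c]$ and the classification up to isomorphism. For the structural part I would start from Bavula's definition with $n=2$ and $R=\Bbbk$: the half-matrices $Q,\mathbb{A},\mathbb{B}$ then reduce to single scalars (since $Z(\Bbbk)=\Bbbk$), so the unique relation of type (\ref{Bavula2023(2)}) is $x_2x_1-qx_1x_2 = ax_1+bx_2+c$, which is exactly the displayed presentation; because $n=2$ there are no triples $i<j<k$, the consistency condition recalled just before the statement is vacuous, and hence $A$ has PBW basis $\bigoplus_{i,j\in\mathbb{N}}\Bbbk x_1^i x_2^j$ for arbitrary $a,b,c$. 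Rewriting the relation as $x_2x_1 = (qx_1+b)x_2 + (ax_1+c)$ identifies $A$ with the Ore extension $\Bbbk[x_1][x_2;\sigma,\delta]$, where $\sigma$ is the $\Bbbk$-algebra endomorphism of $\Bbbk[x_1]$ with $\sigma(x_1)=qx_1+b$ (an automorphism since $q\in\Bbbk^{*}$, with inverse $x_1\mapsto q^{-1}(x_1-b)$) and $\delta$ is the $\sigma$-derivation with $\delta(x_1)=ax_1+c$ (a $\sigma$-derivation of a one-variable polynomial ring is uniquely determined by, and exists for, any prescribed value on the variable). The two presented algebras coincide because each sends the PBW basis to the PBW basis. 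Then the Hilbert basis theorem for Ore extensions and the standard criterion ($S$ a domain, $\sigma$ injective) give that $A$ is a Noetherian domain.

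For the classification I would use affine changes of the two generators $x_i \mapsto \alpha x_i + \beta$ (and invertible linear combinations); these are algebra isomorphisms that preserve the PBW property. Write the relation as $[x_2,x_1]_q := x_2x_1 - qx_1x_2 = ax_1+bx_2+c$ and split into two cases. If $q\neq 1$, a translation $x_i\mapsto x_i - s_i$ changes the linear coefficients by $(1-q)$ times the shifts, so one can choose $s_1,s_2$ killing both linear terms, reducing the relation to $x_2x_1 - qx_1x_2 = c'$ with the explicit scalar $c' = c - ab/(q-1)$; when $c'=0$ this is the quantum plane $\mathcal{O}_q(\Bbbk)$, and when $c'\neq 0$, rescaling $x_1\mapsto x_1/c'$ yields $x_2x_1 - qx_1x_2 = 1$, the quantum Weyl algebra $A_1^q(\Bbbk)$ (here $q\in\Bbbk\setminus\{0,1\}$ by the standing hypothesis and the case assumption). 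If $q=1$, the relation is $[x_2,x_1] = ax_1+bx_2+c$: when $a=b=c=0$ we get $\Bbbk[x_1,x_2]$; when $a=b=0$, $c\neq 0$, rescaling a generator gives $[x_2,x_1]=1$, i.e.\ (after swapping generators) the Weyl algebra $A_1(\Bbbk)$; and when $(a,b)\neq(0,0)$, a translation kills the constant $c$ (solvable since the linear part is nonzero), leaving $[x_2,x_1]=ax_1+bx_2$, a two-dimensional non-abelian Lie algebra structure on $\mathrm{span}_{\Bbbk}(x_1,x_2)$, so by the classification of two-dimensional Lie algebras a linear change of basis brings it to $[x_2',x_1']=x_1'$, giving $A\cong U(\mathfrak{n}_2)$. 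This shows every bi-quadratic algebra on two generators is isomorphic to one of the five.

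The main obstacle is the complementary half, that the five algebras are pairwise non-isomorphic: the coarse invariants are useless (all have Gelfand--Kirillov dimension $2$), so finer ones are needed. I would isolate $\Bbbk[x_1,x_2]$ by commutativity; separate $A_1(\Bbbk)$ from $A_1^q(\Bbbk)$ by observing that $A_1(\Bbbk)$ in characteristic zero has no finite-dimensional representations whereas $A_1^q(\Bbbk)$ admits one-dimensional ones; and distinguish $U(\mathfrak{n}_2)$ and $\mathcal{O}_q(\Bbbk)$ from the Weyl-type algebras and from each other using normal elements and the structure of their proper two-sided ideals (for instance $x_1$ is normal in both $U(\mathfrak{n}_2)$ and $\mathcal{O}_q(\Bbbk)$ and generates a proper ideal, but the resulting quotients differ). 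Since the proposition is attributed to \cite[Theorem 2.1]{Bavula2023}, the shortest route is to carry out the reductions above and invoke that reference for the non-isomorphism; a secondary, routine point is checking that the case split is exhaustive and that each affine substitution is an honest isomorphism of presented algebras, which follows once the substitutions are written out explicitly.
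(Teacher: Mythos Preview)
The paper does not give its own proof of this proposition: it is stated as a quotation from Bavula's paper, with explicit page and theorem references, and no proof environment follows it. So there is nothing in the paper to compare your proposal against beyond the bare statement.

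Your outline is correct and is the standard argument. The structural claims (vacuous consistency conditions when $n=2$, identification with an Ore extension via $\sigma(x_1)=qx_1+b$, $\delta(x_1)=ax_1+c$, Noetherian domain) are exactly as stated in the proposition and your justifications are right. For the classification, the affine reductions you describe are the usual ones and they do exhaust the cases; your computation of the residual constant $c'=c-ab/(q-1)$ in the $q\neq 1$ case is correct, and the $q=1$ branch via the classification of two-dimensional Lie algebras is the expected move. You are also right that the pairwise non-isomorphism is the only part requiring extra input, and since the proposition is explicitly attributed to \cite[Theorem 2.1]{Bavula2023}, deferring that step to the reference is appropriate here. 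One small caveat: your suggested separation of $A_1(\Bbbk)$ from $A_1^q(\Bbbk)$ via finite-dimensional representations works only in characteristic zero; over a field of characteristic $p>0$ the Weyl algebra does have finite-dimensional modules, so if you wanted a self-contained argument you would need a different invariant (e.g.\ the centre, or the structure of the automorphism group).
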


Proposition \ref{Bavula2023Theorem1.3} gives necessary and sufficient conditions for the bi-quadratic algebras on three-generators have a PBW basis (c.f. \cite{BellSmith1990, ReyesSuarez20173D, ReyesSarmiento2022, Rosenberg1995}). As we will see in Section \ref{DSBiquadraticalgebrasThreegenerators}, this is very useful to characterize the differential smoothness of these algebras.

\begin{proposition}[{\cite[Theorem 1.3]{Bavula2023}}]\label{Bavula2023Theorem1.3}
If $A = \Bbbk[x_1, x_2, x_3;Q, \mathbb{A}, \mathbb{B}]$ is a bi-quadratic algebra where $Q = (q_1, q_2, q_3) \in \Bbbk^{* 3}$,
\[
\mathbb{A} = \begin{bmatrix}
    a & b & c \\ \alpha & \beta & \gamma \\ \lambda & \mu & \nu
\end{bmatrix}\quad {\rm and}\quad \mathbb{B} = \begin{bmatrix} b_1 \\ b_2 \\ b_3 \end{bmatrix}.
\]

The algebra $A$ is generated over $\Bbbk$ by the elements $x_1, x_2$ and $x_3$ subject to the defining relations
\begin{align}
    x_2 x_1 - q_1x_1x_2 = &\ ax_1 + bx_2 + cx_3 + b_1, \label{Bavula2023(8)} \\
    x_3 x_1 - q_2 x_1 x_3 = &\ \alpha x_1 + \beta x_2 + \gamma x_3 + b_2, \label{Bavula2023(9)} \\
    x_3 x_2 - q_3x_2 x_3 = &\ \lambda x_1 + \mu x_2 + \nu x_3 + b_3. \label{Bavula2023(10)}
\end{align}

The relations {\rm(}\ref{Bavula2023(8)}{\rm )}, {\rm (}\ref{Bavula2023(9)}{\rm )} and {\rm (}\ref{Bavula2023(10)}{\rm )} are consistent and $A = \bigoplus\limits_{\alpha \in \mathbb{N}^3} \Bbbk x^{\alpha}$ where $x^{\alpha} = x_1^{\alpha_1} x_2^{\alpha_2}x_3^{\alpha_3}$ if and only if the following conditions hold:
\begin{align}
&\ (1 - q_3)\alpha =  (1 - q_2)\mu, \label{Bavula2023(11)} \\
&\ (1 - q_3) a = (1 - q_1)\nu, \label{Bavula2023(12)} \\
&\ (1 - q_2) b = (1 - q_1)\gamma \label{Bavula2023(13)} \\
&\ (1 - q_1 q_2) \lambda = 0, \label{Bavula2023(14)} \\
&\ (q_1 - q_3) \beta = 0, \label{Bavula2023(15)} \\
&\ (1 - q_2 q_3)c = 0, \label{Bavula2023(16)} \\
&\ ((1-q_3)\alpha -\mu)a+(b+q_1\gamma)\lambda-\nu\alpha+(q_1q_2-1)b_3=0, \label{Bavula2023(17)} \\
&\ (a-\nu)\beta + q_1\gamma\mu -q_3\alpha b+(q_1-q_3)b_2=0, \label{Bavula2023(18)}\\
&\ (a+(q_1-1)\nu)\gamma+b\nu-(\mu+q_3\alpha)c+(1-q_2q_3)b_1=0, \quad {\rm and} \label{Bavula2023(19)}\\
&\ -(\mu + q_3\alpha)b_1+(a-\nu)b_2+(b+q_1\gamma)b_3=0. \label{Bavula2023(20)}
\end{align}

Furthermore, if $A = \bigoplus\limits_{\alpha \in \mathbb{N}^3} \Bbbk x^{\alpha}$ where $x^{\alpha} = x_1^{\alpha_1} x_2^{\alpha_2} x_3^{\alpha_3}$ then $A = \bigoplus\limits_{\alpha \in \mathbb{N}^3} \Bbbk x^{\alpha}_{\sigma}$ for all $\sigma \in S_3$ where $x^{\alpha}_{\sigma} = x^{\alpha_1}_{\sigma(1)} x^{\alpha_2}_{\sigma(2)} x^{\alpha_3}_{\sigma(3)}$.
\end{proposition}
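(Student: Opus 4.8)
\noindent\emph{Proof proposal.} Since here $R=\Bbbk$, $\sigma=\mathrm{id}$ and $\delta=0$, the only nontrivial relations are the three quadratic ones (\ref{Bavula2023(8)})--(\ref{Bavula2023(10)}), and the claim is a special case of the general consistency criterion for bi-quadratic algebras recalled just above --- itself an instance of Bergman's Diamond Lemma for the free algebra $\Bbbk\{x_1,x_2,x_3\}$ ordered by the degree-by-lexicographic order with $x_1<x_2<x_3$. The plan is to make that criterion explicit for $n=3$. Orient the relations as reduction rules $x_jx_i\mapsto q\,x_ix_j+(\text{terms of degree}\le 1)$ for $j>i$; these strictly decrease the degree-by-lexicographic order, so the rewriting terminates and the irreducible words are precisely the PBW monomials $x_1^{a}x_2^{b}x_3^{c}$. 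Since $n=3$, there is a unique triple $i<j<k$, namely $(1,2,3)$, hence no inclusion ambiguities (all left-hand sides have degree two) and exactly one overlap ambiguity: the word $x_3x_2x_1$. By the Diamond Lemma, $A=\bigoplus_{\alpha\in\mathbb{N}^3}\Bbbk x^{\alpha}$ if and only if the two reductions of $x_3x_2x_1$ to a $\Bbbk$-linear combination of PBW monomials --- one starting by rewriting the sub-word $x_3x_2$ via (\ref{Bavula2023(10)}), the other by rewriting $x_2x_1$ via (\ref{Bavula2023(8)}) --- coincide, that is, $c_{3,2,1,\alpha}=c'_{3,2,1,\alpha}$ for all $\alpha$ with $|\alpha|\le 2$.

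The computational heart is to carry out these two reductions and compare coefficients. From $x_3(x_2x_1)$ one substitutes (\ref{Bavula2023(8)}) and then repeatedly applies (\ref{Bavula2023(9)}) and (\ref{Bavula2023(10)}) to move every occurrence of $x_3x_1$ and $x_3x_2$ to the right; from $(x_3x_2)x_1$ one substitutes (\ref{Bavula2023(10)}) first and proceeds similarly. Each branch terminates as a $\Bbbk$-combination of $1,x_1,x_2,x_3,x_1^2,x_1x_2,x_1x_3,x_2^2,x_2x_3,x_3^2$ and $x_1x_2x_3$. The coefficient of the leading word $x_1x_2x_3$ equals $q_1q_2q_3$ in both branches and imposes no condition; matching the coefficients of the six quadratic monomials produces exactly (\ref{Bavula2023(11)})--(\ref{Bavula2023(16)}), matching the three linear coefficients produces (\ref{Bavula2023(17)})--(\ref{Bavula2023(19)}), and matching the constant terms produces (\ref{Bavula2023(20)}); together these are the ten stated equations. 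I expect the one real difficulty to be bookkeeping --- tracking the many cross-terms generated when the affine corrections $ax_1+bx_2+cx_3+b_1$ and their analogues are pushed past the remaining generator, and collecting them monomial by monomial --- with centrality of the units $q_i\in\Bbbk^{*}$ ensuring that each intermediate rewrite is legitimate and introduces no denominators.

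For the final assertion, assume $A=\bigoplus_{\alpha}\Bbbk x^{\alpha}$ and filter $A$ by total degree in $x_1,x_2,x_3$. The PBW basis shows that the degree-$\le d$ component $F_d$ has dimension $\#\{\alpha\in\mathbb{N}^3:|\alpha|\le d\}$. For any $\sigma\in S_3$, each relation (\ref{Bavula2023(8)})--(\ref{Bavula2023(10)}) may be solved for either of its two quadratic monomials because $q_i\neq 0$, so any word can be brought to the order $x_{\sigma(1)},x_{\sigma(2)},x_{\sigma(3)}$ modulo words of strictly smaller length; hence $\{x^{\alpha}_{\sigma}:|\alpha|\le d\}$ spans $F_d$. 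Being a spanning set of cardinality $\dim_{\Bbbk}F_d$, it is a basis, and passing to the union over $d$ gives $A=\bigoplus_{\alpha}\Bbbk x^{\alpha}_{\sigma}$. (Equivalently, one may simply rerun the Diamond Lemma after relabelling the generators by $\sigma$; the single resulting consistency condition is again the system (\ref{Bavula2023(11)})--(\ref{Bavula2023(20)}) up to the evident symmetry, hence already holds.)
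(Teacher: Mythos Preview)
Your proposal is correct and follows precisely the strategy the paper itself outlines. Note that the paper does not give its own proof of this proposition: it is quoted verbatim as \cite[Theorem~1.3]{Bavula2023}. However, in the paragraph immediately preceding the statement, the paper recalls Bavula's general criterion \cite[Theorem~1.1]{Bavula2023}: the PBW basis exists if and only if, for every triple $i<j<k$, the two reductions of $x_kx_jx_i$ (starting with $x_kx_j$ versus $x_jx_i$) agree, i.e.\ $c_{k,j,i,\alpha}=c'_{k,j,i,\alpha}$ for all $|\alpha|\le 2$; and if so, the PBW property holds for every ordering $\sigma\in S_n$. Your argument is exactly the specialization of this to $n=3$ with the single overlap $x_3x_2x_1$, carried out via the Diamond Lemma, and your filtration/dimension-count for the $S_3$ statement is a clean way to recover the last clause. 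There is nothing to correct.
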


From now on, the expression \textquotedblleft bi-quadratic algebra\textquotedblright\ means \textquotedblleft bi-quadratic algebra with PBW basis\textquotedblright.

\begin{example}\label{Examplesbi-quadraticthreegenerators}
Let us see some examples of bi-quadratic algebras on three generators.
\begin{enumerate}
    \item [\rm (a)] The universal enveloping algebra of any 3-dimensional Lie algebra.
    
    \item [\rm (b)] The 3-{\em dimensional quantum space} $\mathbb{A}_{q_1, q_2, q_3}^{3} := \Bbbk[x_1, x_2, x_3; Q, \mathbb{A} = 0, \mathbb{B} = 0]$.
    
    \item [\rm (c)] Following Havli\v{c}ek et al. \cite[p. 79]{HavlicekKlimykPosta2000}, the $ \Bbbk$-algebra $U_q'(\mathfrak{so}_3)$ is generated by the indeterminates $I_1, I_2$, and $I_3$ subject to the relations given by
\begin{align*}
    I_2I_1 - qI_1I_2 = &\ -q^{\frac{1}{2}}I_3, \\
    I_3I_1 - q^{-1}I_1I_3 = &\ q^{-\frac{1}{2}}I_2, \quad {\rm and} \\
    I_3I_2 - qI_2I_3 = &\ -q^{\frac{1}{2}}I_1, \quad {\rm for}\ q\in \Bbbk\ \backslash \ \{0, \pm 1\}. 
\end{align*}

\item [\rm (d)] Zhedanov \cite[Section 1]{Zhedanov1991} introduced the {\em Askey-Wilson algebra} $AW(3)$ as the $\mathbb{R}$-algebra generated by three operators $K_0, K_1$, and $K_2$, that satisfy the commutation relations 
\begin{align*}
[K_0, K_1]_{\omega} = &\ K_2, \\
[K_2, K_0]_{\omega} = &\ BK_0 + C_1K_1 + D_1, \quad {\rm and} \\
[K_1, K_2]_{\omega} = &\ BK_1 + C_0K_0 + D_0, 
\end{align*}

where $B, C_0, C_1, D_0$, and $D_1$ are elements of $\mathbb{R}$ that represent the structure constants of the algebra, and the $q$-commutator $[ - , -]_{\omega}$ is given by $[\square, \triangle]_{\omega}:= e^{\omega}\square \triangle - e^{- \omega}\triangle \square$, where $\omega\in \mathbb{R}$. Notice that in the limit $\omega \to 0$, the algebra AW(3) becomes an ordinary Lie algebra with three generators ($D_0$ and $D_1$ are included among the structure constants of the algebra in order to take into account algebras of Heisenberg-Weyl type). The relations defining the algebra can be written as 
\begin{align*}
    e^{\omega}K_0K_1 - e^{-\omega}K_1K_0 = &\ K_2,\\
    e^{\omega} K_2K_0 - e^{-\omega}K_0 K_2 = &\ BK_0 + C_1K_1 + D_1, \quad {\rm and} \\
    e^{\omega}K_1K_2 - e^{-\omega}K_2K_1 = &\ BK_1 + C_0K_0 + D_0.
\end{align*}
\end{enumerate}
\end{example}

With the aim of classifying bi-quadratic algebras on three generators, Bavula \cite{Bavula2023, BavulaAlKhabyah2023} considered $Q = (q_1, q_2, q_3) \in \Bbbk^{* 3}$ in Proposition \ref{Bavula2023Theorem1.3} into the following four cases:
\begin{itemize}
    \item $q_1 = q_2 = q_3 = 1$ (Lie type); 
    \item $q_1 \neq 1,\ q_2 = q_3 = 1$; 
    \item $q_1\neq 1,\ q_2\neq 1,\ q_3 = 1$, and
    \item $q_1\neq 1,\ q_2\neq 1,\ q_3\neq 1$.
\end{itemize}

As it can be seen from Bavula's papers, there are exactly $44$ types (up to isomorphism and considering $\sqrt{\Bbbk}\subseteq \Bbbk$ and $\sqrt[3]{\Bbbk}\subseteq \Bbbk$) of non-isomorphic algebras of bi-quadratic algebras on three generators.

\section{Differential and integral calculus}\label{Differentialandintegralcalculusbi-quadraticalgebras}

In this section we investigate the differential smoothness of bi-quadratic algebras on $n$ generators with PBW basis. Before, we say a few words about the smoothness of the bi-quadratic algebras on two generators, three generators and four generators.

\subsection{Bi-quadratic algebras on two generators}\label{DSBiquadraticalgebrasTwogenerators}

Brzezi{\'n}ski \cite{Brzezinski2015} characterized the differential smoothness of skew polynomial rings of the form $\Bbbk[t][x; \sigma_{q, r}, \delta_{p(t)}]$ where $\sigma_{q, r}(t) = qt + r$, with $q, r \in \Bbbk,\ q\neq 0$, and the $\sigma_{q, r}-$derivation $\delta_{p(t)}$ is defined as
\[
\delta_{p(t)} (f(t)) = \frac{f(\sigma_{q, r}(t)) - f(t)}{\sigma_{q, r}(t) - t} p(t),
\]

for an element $p(t) \in \Bbbk[t]$. $\delta_{p(t)}(f(t))$ is a suitable limit when $q = 1$ and $r = 0$, that is, when $\sigma_{q, r}$ is the identity map of $\Bbbk[t]$.

For the maps
\begin{equation}\label{Brzezinski2015(3.4)}
\nu_t(t) = t,\quad \nu_t(x) = qx + p'(t)\quad {\rm and}\quad \nu_x(t) = \sigma_{q, r}^{-1}(t),\quad \nu_x(x) = x,
\end{equation}

where $p'(t)$ is the classical $t$-derivative of $p(t)$, Brzezi{\'n}ski \cite[Lemma 3.1]{Brzezinski2015} showed that all of them simultaneously extend to algebra automorphisms $\nu_t$ and $\nu_x$ of $\Bbbk[t][x; \sigma_{q, r}, \delta_{p(t)}]$ only in the following three cases:
    \begin{enumerate}
        \item [\rm (a)] $q = 1, r = 0$ with no restriction on $p(t)$;
        
        \item [\rm (b)] $q = 1, r\neq 0$ and $p(t) = c$, $c\in \Bbbk$;
        
        \item [\rm (c)] $q\neq 1, p(t) = c\left( t + \frac{r}{q-1} \right)$, $c\in \Bbbk$ with no restriction on $r$.
    \end{enumerate}
    
In any of the cases {\rm (a) - (c)} we have that $\nu_x \circ \nu_t = \nu_t \circ \nu_x$. If the Ore extension $\Bbbk[t][x; \sigma_{q, r}, \delta_{p(t)}]$ satisfies one of these three conditions, then it is differentially smooth \cite[Proposition 3.3]{Brzezinski2015}. 

As we saw in Proposition  \ref{BiquadraticalgebrasPBWbasisTwogenerators}, up to isomorphism, there are only five bi-quadratic algebras on two generators over $\Bbbk$, so that having in mind Brzezi{\'n}ski's result on the differential smoothness of $\Bbbk[t][x; \sigma_{q, r}, \delta_{p(t)}]$, we get that the algebras $\Bbbk[x_1,x_2],\ A_1(\Bbbk)$ and $U(\mathfrak{n}_2)$ belong to the case (a), while $\mathcal{O}_q(\Bbbk)$ belongs to the case (c), whence these four algebras are differentially smooth. With respect to the quantum Weyl algebra $A_{1}^{q}(\Bbbk)$, this cannot be expressed by using this kind of skew polynomial rings, so that its differential smoothness should be investigated by considering other and alternative approach. Precisely, Rubiano and Reyes \cite[Proposition 3.1]{RubianoReyes2024DSBiquadraticAlgebras} proved that $A_{1}^{q}(\Bbbk)$ is differentially smooth.

\subsection{Bi-quadratic algebras on three generators}\label{DSBiquadraticalgebrasThreegenerators}

Rubiano and Reyes \cite{RubianoReyes2024DSBiquadraticAlgebras} consider the bi-quadratic algebras in three generators and seek conditions under which these algebras are differentially smooth. In this context, they find additional relations beyond those provided by Bavula (Proposition \ref{Bavula2023Theorem1.3}) for the PBW basis. Nevertheless, several of the relations he introduced also appear as conditions for differential smoothness, as the following proposition shows. 

\begin{proposition}[{\cite[Theorem 3.2]{RubianoReyes2024DSBiquadraticAlgebras}}]\label{Firsttheoremsmoothnessbi-3quadraticalgebras}
If the conditions
\begin{align}
    c = \beta = \lambda = &\ 0, \\
    b_1(q_1-1)-ab = &\ 0, \\
    b_2(q_2-1)-\alpha\gamma = &\ 0, \\
    b_3(q_3-1) - \mu \nu = &\ 0, \quad {\rm and} \\
    \mu a = &\ 0
\end{align}

hold, then $A$ is differentially smooth.
\end{proposition}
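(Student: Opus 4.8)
The plan is to build an explicit $3$-dimensional connected integrable differential calculus on the bi-quadratic algebra $A = \Bbbk[x_1, x_2, x_3; Q, \mathbb{A}, \mathbb{B}]$ under the stated hypotheses, and then invoke Definition \ref{BrzezinskiSitarz2017Definition2.4} together with the equivalent characterization of integrability in Proposition \ref{integrableequiva}(4). First I would verify that $\mathrm{GKdim}(A) = 3$: since $A$ has a PBW basis $A = \bigoplus_{\alpha \in \mathbb{N}^3} \Bbbk x^\alpha$, it is an iterated filtered deformation of $\Bbbk[x_1, x_2, x_3]$, so its associated graded ring is a quadratic algebra of Gelfand--Kirillov dimension $3$, and hence so is $A$. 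This matches the required dimension $n = 3$ of the calculus.

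Next I would construct the calculus. Following Brzezi\'nski's template for skew polynomial rings (adapted to the three-variable setting as in \cite{RubianoReyes2024DSBiquadraticAlgebras}), I would take $\Omega^1 A$ to be the free right $A$-module on generators $dx_1, dx_2, dx_3$, and impose commutation rules of the form $x_i\, dx_j = \sum_k dx_k\, M_{kj}^{(i)}$ determined by the defining relations (\ref{Bavula2023(8)})--(\ref{Bavula2023(10)}) after applying $d$ and the graded Leibniz rule. The hypotheses $c = \beta = \lambda = 0$ are exactly what is needed for these rules to close consistently (they kill the ``cross terms'' that would otherwise obstruct $\Omega^2 A$ from being free of rank $3$ and $\Omega^3 A$ from being free of rank $1$); the remaining four relations $b_1(q_1-1) = ab$, $b_2(q_2-1) = \alpha\gamma$, $b_3(q_3-1) = \mu\nu$, and $\mu a = 0$ are what make the candidate endomorphisms $\nu_{x_i}$ — the analogues of (\ref{Brzezinski2015(3.4)}) — simultaneously extend to commuting algebra automorphisms of $A$, which is the key compatibility ensured by Brzezi\'nski's Lemma 3.1 in the one-variable case. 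I would then set $\Omega^k A = \bigwedge^k \Omega^1 A$ with the induced differential, check $d^2 = 0$ on generators, and confirm connectedness ($\ker(d|_{\Omega^0 A}) = \Bbbk$), which holds because $da = 0$ forces $a$ to be a scalar in any PBW-based deformation.

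With the calculus in hand, the integrability step is to produce a volume form $\omega = dx_1 \wedge dx_2 \wedge dx_3$ (up to a unit), establish that it freely generates $\Omega^3 A$ as both a left and right $A$-module, compute the associated automorphism $\nu_\omega$ from (\ref{BrzezinskiSitarz2017(2.2)}) and the projection $\pi_\omega$ from (\ref{BrzezinskiSitarz2017(2.1)}), and then verify condition (4) of Proposition \ref{integrableequiva}: that the left multiplication maps $\ell^k_{\pi_\omega}\colon \Omega^k A \to \mathcal{I}_{3-k}A$ are bijective for $k = 0, 1, 2$. Concretely I would exhibit, for each $k$, finite families $\omega_i^k, \overline{\omega}_i^k \in \Omega^k A$ satisfying the two resolution identities of Proposition \ref{BrzezinskiSitarz2017Lemmas2.6and2.7}(2) — for instance the obvious dual bases $\{dx_i\}$ and $\{dx_j \wedge dx_k\}$ paired by the wedge into $\omega$ — which simultaneously gives bijectivity and the finitely-generated projective property. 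I expect the main obstacle to be the bookkeeping in the second step: showing that the commutation rules $x_i\, dx_j = \sum_k dx_k\, M_{kj}^{(i)}$ are consistent on all of $\Omega A$ (i.e. that applying them in different orders to $x_i x_j\, dx_k$ yields the same answer), which amounts to checking that the matrices $M^{(i)}$ satisfy the same braiding-type overlap relations that the structure constants $q_i$, $a$, $b$, $\alpha$, $\gamma$, $\mu$, $\nu$ satisfy by virtue of Bavula's PBW conditions (\ref{Bavula2023(11)})--(\ref{Bavula2023(20)}) — so the hard part is really verifying that the PBW hypotheses plus the five new hypotheses are jointly sufficient for this consistency, rather than any single calculation being difficult.
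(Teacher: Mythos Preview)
Your proposal is correct and follows essentially the same route as the paper. The paper does not prove this proposition directly (it is cited from \cite{RubianoReyes2024DSBiquadraticAlgebras}), but its proof of the $n$-variable generalization, Theorem~\ref{Firsttheoremsmoothnessbi-quadraticalgebras}, proceeds exactly as you describe: define commuting automorphisms $\nu_{x_i}$ of $A$, use them to put a left $A$-module structure on the free right module $\Omega^1A = \bigoplus_i dx_i\,A$ via $p\,dx_i = dx_i\,\nu_{x_i}(p)$, extend $d$ by Leibniz, check connectedness via the explicit partial derivatives, and finish with the dual-basis criterion of Proposition~\ref{BrzezinskiSitarz2017Lemmas2.6and2.7}(2) applied to $\omega = dx_1\wedge dx_2\wedge dx_3$.

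One organizational point worth noting: rather than positing general commutation matrices $M^{(i)}$ and then checking braiding-type overlap relations as you suggest in your final paragraph, the paper works directly with the $\nu_{x_i}$ from the start. The hypotheses $b_1(q_1{-}1)=ab$, $b_2(q_2{-}1)=\alpha\gamma$, $b_3(q_3{-}1)=\mu\nu$ are precisely what make each $\nu_{x_i}$ respect the defining relations of $A$ (hence extend to an algebra endomorphism), while the remaining compatibility conditions (including $\mu a=0$ in the three-variable case) are what force $\nu_{x_i}\circ\nu_{x_j}=\nu_{x_j}\circ\nu_{x_i}$ on generators. This packaging makes the consistency check you flagged as the ``main obstacle'' entirely routine, since the bimodule structure on $\Omega^1A$ is automatically well-defined once the $\nu_{x_i}$ are genuine automorphisms, and the higher wedge relations follow from their commutativity.
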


Following Bavula's classification presented in \cite{Bavula2023}, Proposition \ref{Firsttheoremsmoothnessbi-3quadraticalgebras} shows that there are twenty two bi-quadratic algebras on three generators that are differentially smooth.

Rubiano and Reyes also provided necessary conditions for a bi-quadratic algebra on three variables to fail to be differentially smooth. It can be seen that these conditions depend on the constants $c$, $\beta$, and $\lambda$.

\begin{proposition}[{\cite[Theorem 3.3]{RubianoReyes2024DSBiquadraticAlgebras}}]\label{Secondtheoremsmoothnessbi-3quadraticalgebras}
If one of the conditions $c\not = 0$, $\beta\not=0$ or $\lambda\not=0$ holds, then $A$ is not differentially smooth.
\end{proposition}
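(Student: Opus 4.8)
The strategy is to argue by contradiction in the same spirit as the proof of the non-smoothness part of \cite[Theorem 3.3]{RubianoReyes2024DSBiquadraticAlgebras} and, ultimately, Brzezi\'nski--Sitarz's treatment of $\mathbb{C}[x,y]/\langle xy\rangle$ (Example \ref{examplesDSalgebrasBre}). Since $A=\Bbbk[x_1,x_2,x_3;Q,\mathbb{A},\mathbb{B}]$ has a PBW basis $A=\bigoplus_{\alpha\in\mathbb{N}^3}\Bbbk x^{\alpha}$, one computes that $\mathrm{GKdim}(A)=3$, so if $A$ were differentially smooth it would carry a $3$-dimensional connected integrable differential calculus $(\Omega A,d)$. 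The first step is to pin down the structure of $\Omega^1 A$: by the density condition it is generated as a left (equivalently right) $A$-module by $dx_1,dx_2,dx_3$, and because $A$ is a deformation of $\Bbbk[x_1,x_2,x_3]$ with the PBW property, one expects $\Omega^1A$ to be free of rank $3$ on these generators, with a right module structure governed by automorphisms $\nu_i$ determined by commutation relations of the form $a\,dx_i=dx_i\,\nu_i(a)+(\text{lower order})$. Thus I would first establish the analogue of \cite[Lemma 3.1]{Brzezinski2015}: writing down the constraints on $\Omega^1A$ forced by applying $d$ to the defining relations (\ref{Bavula2023(8)})--(\ref{Bavula2023(10)}) and demanding $d^2=0$ together with the Leibniz rule.

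The heart of the argument is to show that when $c\neq 0$ (the cases $\beta\neq 0$ and $\lambda\neq 0$ being symmetric after a permutation of generators, which is legitimate by the last sentence of Proposition \ref{Bavula2023Theorem1.3}), no consistent choice of twisting automorphism $\nu$ and volume form can exist. Concretely, I would apply $d$ to relation (\ref{Bavula2023(8)}), $x_2x_1-q_1x_1x_2=ax_1+bx_2+cx_3+b_1$, and expand using the Leibniz rule; the term $c\,dx_3$ on the right forces $dx_3$ to appear in an expression that also involves only $dx_1$ and $dx_2$ on the left unless further relations among the structure constants hold. Pushing this through to $\Omega^2A$ and $\Omega^3A$, one tracks a top-form $\omega$ and the induced endomorphism $\nu_\omega$ from (\ref{BrzezinskiSitarz2017(2.2)}); the obstruction is that the consistency conditions for $\nu_\omega$ to be an \emph{automorphism} (needed for a volume form, hence for integrability by Proposition \ref{integrableequiva}) collide with the presence of the $cx_3$ term. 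The cleanest route is probably to show the relevant left-multiplication map $\ell^k_{\pi_\omega}$ in part (4) of Proposition \ref{integrableequiva} fails to be bijective—its image lands in a proper submodule because $c\neq 0$ makes one of the would-be generators of $\Omega^2A$ redundant, exactly as the relation $xy=0$ collapses $\Omega^1$ in the Brzezi\'nski--Sitarz example.

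The main obstacle I anticipate is the bookkeeping needed to show that the failure is genuine rather than an artifact of a poor choice of basis for $\Omega A$: one must show that \emph{no} $3$-dimensional connected integrable calculus exists, not merely that the "obvious" one fails. To handle this I would first use connectedness ($\ker(d|_{\Omega^0A})=\Bbbk$) to rule out degenerate calculi, then argue that any $3$-dimensional calculus on an algebra with this PBW basis must have $\Omega^1A$ of rank $3$ (a dimension count via Gelfand--Kirillov dimension, since a smaller rank would force $\Omega^3A=0$ and a larger rank would violate integrability/Poincar\'e duality), and finally run the contradiction above on that forced model. A secondary subtlety is making the permutation symmetry precise: the roles of $c,\beta,\lambda$ are interchanged by relabelling $(x_1,x_2,x_3)$, and one should check that the PBW hypothesis is preserved under such relabelling—which it is, by the $S_3$-invariance statement in Proposition \ref{Bavula2023Theorem1.3}—so that it suffices to treat $c\neq 0$ in detail and invoke symmetry for the other two.
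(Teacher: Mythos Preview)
Your opening move is exactly right: apply $d$ to relation (\ref{Bavula2023(8)}) and observe that the Leibniz rule produces a term $c\,dx_3$ on the right while the left involves only $dx_1,dx_2$. But the paper's argument (carried out for general $n$ in Theorem \ref{Secondtheoremsmoothnessbi-quadraticalgebras}, of which this proposition is the $n=3$ case with $c=a_{21,3}$, $\beta=a_{31,2}$, $\lambda=a_{32,1}$) stops essentially there. The identity
\[
dx_2\,x_1 + x_2\,dx_1 \;=\; q_1\,dx_1\,x_2 + q_1\,x_1\,dx_2 + a\,dx_1 + b\,dx_2 + c\,dx_3
\]
holds in \emph{any} differential calculus over $A$, because $d$ is a derivation and the density condition forces $\Omega^1A$ to be generated as a bimodule by $dx_1,dx_2,dx_3$. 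When $c\neq 0$ this equation expresses $dx_3$ in the sub-bimodule generated by $dx_1,dx_2$, so $\Omega^1A$ is generated by two elements, whence $\Omega^3A=0$. Since $\mathrm{GKdim}(A)=3$, no $3$-dimensional calculus exists, and $A$ is not differentially smooth.

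Your detour through the volume form $\omega$, the automorphism $\nu_\omega$, and the bijectivity of $\ell^k_{\pi_\omega}$ is therefore unnecessary, and your anticipated ``main obstacle''---ruling out all calculi rather than just the obvious one---dissolves: the Leibniz identity above is model-independent, so the collapse of $\Omega^1A$ to two generators is forced in every calculus, not just a particular candidate. Indeed you already note parenthetically that ``a smaller rank would force $\Omega^3A=0$''; that observation \emph{is} the contradiction, reached immediately, and there is no need to first establish rank $3$ and then analyse integrability. The symmetry argument via Proposition \ref{Bavula2023Theorem1.3} for the cases $\beta\neq 0$ and $\lambda\neq 0$ is fine but also superfluous: each of the three defining relations yields its own Leibniz identity, and the same one-line argument applies directly.
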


Proposition \ref{Secondtheoremsmoothnessbi-3quadraticalgebras} shows that twenty one bi-quadratic algebras cannot be differentially smooth.

\subsection{Bi-quadratic algebras on four generators}\label{DSBiquadraticalgebrasFourgenerators}

Bavula \cite[p. 699]{Bavula2023} asserted that a construction of bi-quadratic algebras on four generators was introduced by Zhang and Zhang \cite{ZhangZhang2008, ZhangZhang2009} with their class of {\em double Ore extensions}. As one can appreciate in the literature, these extensions are of great importance in the noncommutative algebraic geometry, and more exactly, in the classification of {\em Artin-Schelter regular algebras} introduced by Artin and Schelter \cite{ArtinSchelter1987} (see Bellamy et al. \cite{Bellamyetal2016} and Rogalski \cite{Rogalski2023}). The study of the differential smoothness of a subclass of algebras of double Ore extensions known as {\em double extension regular algebras of type} {\rm (}14641{\rm )} has been carried out by Rubiano and Reyes \cite[Theorem 4.1]{RubianoReyes2024DSDoubleOreExtensions}.

\subsection{Bi-quadratic algebras on \texorpdfstring{$n$}{Lg} generators}\label{DSBiquadraticalgebrasngenerators}

Given the work carried out in the three-variable case, it is natural to ask what happens in the case of $n$ variables. Therefore, we focus on the conditions under which a bi-quadratic algebra is differentially smooth, assuming that it already admits a PBW basis.

Throughout this section, $A$ denotes a bi-quadratic algebra on $n$ generators denoted by $x_1, \ldots, x_n$ with PBW basis, subject to the relations {\rm (}\ref{Bavula2023(1)}{\rm )} and {\rm (}\ref{Bavula2023(2)}{\rm )}. Since these are a subclass of the {\em skew PBW extensions} introduced by Gallego and Lezama \cite{GallegoLezama2010}, it follows from \cite[Theorems 14 and 18]{Reyes2013} that its Gelfand-Kirillov dimension is $n$. This fact is key in Theorems \ref{Firsttheoremsmoothnessbi-quadraticalgebras} and \ref{Secondtheoremsmoothnessbi-quadraticalgebras}.

The following theorem is the first important result of the paper. We extend Brzezi\'nski's ideas \cite{Brzezinski2015} mentioned in Section \ref{DSBiquadraticalgebrasTwogenerators}.

\begin{theorem}\label{Firsttheoremsmoothnessbi-quadraticalgebras}
Let $1\leq i <j \leq n$ and $1\leq k \leq n$. If the conditions
\begin{align}
    a_{ij,k} = &\ 0, \text{\rm for } k\ne i,j, \\
    b_{ij}(q_{ij}-1)-a_{ij,i}a_{ij,j} = &\ 0, \\
    a_{ij,j}(1-q_{ik})= &\ a_{ik,k}(1-q_{ij}), \text{\rm for } i<k<j, \\
    a_{kj,j}(1-q_{ik})= &\ a_{ik,i}(1-q_{kj}), \label{condijk11} \text{\rm for } i<k<j, \\
    a_{ij,j}a_{kj,j}+b_{kj}(q_{kj}-q_{ij})= &\ 0, \text{\rm for } i<k<j, \\
    b_{ij}(q_{ik}-q_{kj})+a_{kj,k}a_{ij,j}q_{ik}-q_{kj}a_{ik,k}a_{ij,i}= &\ 0, \text{\rm 
 for } i<k<j, \\
 a_{ij,i}(1-q_{kj})= &\ a_{kj,k}(1-q_{ij}), \text{\rm for } k<i<j, \\
 a_{ki,i}(1-q_{kj})= &\ a_{kj,j}(1-q_{ki}), \label{condijk21} \text{\rm for } k<i<j, \\
 a_{ij,j}(1-q_{ki})= &\ a_{ki,k}(1-q_{ij}), \text{\rm for } k<i<j, \\
 b_{ij}(1-q_{ki}q_{kj})+a_{ki,k}a_{ij,i}+q_{ki}a_{ij,j}a_{kj,k}= &\ 0, \text{\rm for } k<i<j, \\
 a_{jk,k}(1-q_{ij})= &\ a_{ij,i}(1-q_{jk}), \text{\rm for } i<j<k, \\
 a_{ik,i}(1-q_{jk})= &\ a_{jk,j}(1-q_{ik}), \label{condijk31} \text{\rm for } i<j<k, \\
 a_{ik,k}(1-q_{ij})= &\ a_{ij,j}(1-q_{ik}), \text{\rm for } i<j<k, \\
 b_{ij}(1-q_{ik}q_{jk})+a_{ik,k}a_{ij,i}+q_{ik}a_{ij,j}a_{jk,k}= &\ 0, \text{\rm for } i<j<k
\end{align}

hold, then $A$ is differentially smooth.
\end{theorem}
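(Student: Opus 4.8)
The plan is to construct explicitly an $n$-dimensional connected integrable differential calculus on $A$ and invoke Definition \ref{BrzezinskiSitarz2017Definition2.4}, using the fact that $\mathrm{GKdim}(A)=n$ recalled above. First I would build the first-order calculus $\Omega^1 A$ as the free right $A$-module on generators $dx_1,\dots,dx_n$, equipped with left multiplication rules $x_i\, dx_j = \sum_k (\text{coefficients}) \, dx_k\, (\text{elements of }A)$ dictated by applying $d$ to the defining relations \eqref{Bavula2023(1)}–\eqref{Bavula2023(2)} and imposing the Leibniz rule; the hypotheses $a_{ij,k}=0$ for $k\ne i,j$ are exactly what is needed to keep this module free of rank $n$ with well-defined, associative left action. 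Then I would extend to the full exterior algebra $\Omega A=\bigoplus_{k=0}^{n}\Omega^k A$ with $\Omega^k A$ free of rank $\binom{n}{k}$ on the monomials $dx_{i_1}\wedge\dots\wedge dx_{i_k}$, checking that the wedge relations (the appropriate $q$-twisted anticommutation $dx_i\wedge dx_j = -q_{?}\,dx_j\wedge dx_i + \cdots$) are consistent — this is where the "two ways to reorder $x_k x_j x_i$" phenomenon from Bavula's analysis reappears one degree up, and the mixed quadratic conditions in the hypotheses (those of the form $b_{ij}(q_{ik}-q_{kj})+\cdots=0$ and the linear ones relating $a_{\bullet,\bullet}$ across triples $i<k<j$, $k<i<j$, $i<j<k$) are precisely the cocycle/consistency conditions guaranteeing $d\circ d=0$ and associativity of $\wedge$.

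Next I would identify the top form: $\Omega^n A$ is free of rank one on $\omega := dx_1\wedge\dots\wedge dx_n$, and I would verify it is a \emph{volume form} by exhibiting the algebra automorphism $\nu_\omega$ of $A$ with $a\omega=\omega\nu_\omega(a)$ via \eqref{BrzezinskiSitarz2017(2.2)} — concretely $\nu_\omega$ should send $x_i$ to $(\prod_{j} q_{?})x_i + (\text{lower terms coming from the }\delta_i\text{ and }b_{ij})$, and the remaining hypotheses (notably $b_{ij}(q_{ij}-1)=a_{ij,i}a_{ij,j}$, which in the two-generator case is exactly Brzezi\'nski's condition forcing the pair into case (a) or (c) of Section \ref{DSBiquadraticalgebrasTwogenerators}) are what make this assignment extend to a genuine automorphism rather than merely a linear endomorphism. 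Connectedness ($\ker(d|_{\Omega^0 A})=\Bbbk$) follows since $df=0$ forces all partial "derivatives" of $f$ to vanish, and in a PBW basis that forces $f\in\Bbbk$.

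Finally, to get integrability I would use Proposition \ref{integrableequiva}(4): it suffices to show the left multiplication maps $\ell^k_{\pi_\omega}\colon\Omega^k A\to\mathcal I_{n-k}A$, $\omega'\mapsto \pi_\omega\cdot\omega'$, are bijective for $k=0,\dots,n-1$. Here I would exhibit, for each $k$, the dual pairing explicitly: for a monomial $dx_I$ ($|I|=k$) the map $\omega'\mapsto \pi_\omega(dx_{I^c}\wedge\omega')$ up to sign and a unit recovers the $dx_I$-component, so that the families $\{dx_I\}$ and $\{dx_{I^c}\}$ witness the finite-free-dual criterion of Proposition \ref{BrzezinskiSitarz2017Lemmas2.6and2.7}; bijectivity of $\ell^k_{\pi_\omega}$ then reduces to the invertibility of the "reordering" coefficient matrices, which are triangular with entries that are products of the $q_{ij}\in\Bbbk^*$ (hence units) once all the listed consistency conditions hold. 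Assembling these pieces gives a connected integrable $n$-dimensional calculus, so $A$ is differentially smooth. The main obstacle I anticipate is the second step: verifying associativity of $\wedge$ and $d^2=0$ on degree-$2$ and degree-$3$ forms across \emph{all} ordered triples simultaneously is a substantial bookkeeping task, and it is exactly there that one must check that the fourteen displayed hypotheses are not merely sufficient piecemeal but jointly close up the calculus — I expect the proof to spend most of its length propagating the three-generator computations of \cite{RubianoReyes2024DSBiquadraticAlgebras} to an arbitrary triple $\{i,j,k\}\subseteq\{1,\dots,n\}$ and then arguing that higher-degree consistency is automatic by a "it holds on generators of each $\Omega^k$" reduction.
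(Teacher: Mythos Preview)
Your overall strategy is the same as the paper's: build an explicit $n$-dimensional connected integrable calculus on $A$ with $\Omega^1 A$ free of rank $n$ on $dx_1,\dots,dx_n$, exhibit $\omega=dx_1\wedge\cdots\wedge dx_n$ as a volume form, and apply the finite-generation criterion of Proposition~\ref{BrzezinskiSitarz2017Lemmas2.6and2.7}(2). So the architecture is right.

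The main organizational difference is this: the paper does \emph{not} locate the fourteen hypotheses inside ``$d^2=0$ and associativity of $\wedge$'' as you anticipate. Instead it introduces, for each generator $x_\ell$, an explicit map $\nu_{x_\ell}\colon A\to A$ (sending $x_\ell\mapsto x_\ell$, and $x_m$ to an affine expression in $x_m$ determined by the relation between $x_\ell$ and $x_m$), and the entire bimodule structure of $\Omega^1 A$ is encoded by $p\,dx_\ell=dx_\ell\,\nu_{x_\ell}(p)$. The hypotheses then enter in exactly two places: (i) most of them are precisely the conditions for each $\nu_{x_k}$ to extend to an algebra automorphism of $A$ (checking it respects the relation between $x_i$ and $x_j$ splits into the three cases $i<k<j$, $k<i<j$, $i<j<k$, yielding the three blocks of conditions), and (ii) the remaining conditions (\ref{condijk11}), (\ref{condijk21}), (\ref{condijk31}) are exactly what force the $\nu_{x_\ell}$'s to pairwise commute. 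Once you have a commuting family of automorphisms, the well-definedness of $\wedge$, the vanishing of $d^2$, and the volume-form property (with $\nu_\omega=\nu_{x_1}\circ\cdots\circ\nu_{x_n}$) are essentially automatic from the twisted-multiderivation formalism, and no separate ``triple reordering'' check on $\Omega^2,\Omega^3$ is needed. This is the main simplification you are missing: packaging the left action through the individual $\nu_{x_\ell}$'s converts the anticipated degree-by-degree bookkeeping into two clean algebraic conditions (automorphism + commutation), and the proof spends its length on those rather than on higher-form consistency.
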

\begin{proof}
Consider the following automorphisms for $1 \leq i<j \leq n$:
\begin{align}
   \nu_{x_i}(x_i) = &\ x_i, & \nu_{x_i}(x_j) = &\ q_{ij}^{-1}(x_j-a_{ij,i}),\quad {\rm and} \label{Auto1} \\ 
    \nu_{x_j}(x_i) = &\ q_{ij}x_i+a_{ij,j}, & \nu_{x_j}(x_j) = &\ x_j. \label{Auto2} 
\end{align}

The map $\nu_{x_i}$ can be extended to an algebra homomorphism of $A$ if and only if the definitions of $\nu_{x_i}(x_i)$ and for $1 \leq i<j \leq n$, i.e.
\begin{align*}
   \nu_{x_i}(x_i)\nu_{x_i}(x_j)-q_{ij}\nu_{x_i}(x_j)\nu_{x_i}(x_i) =  a_{ij,i}\nu_{x_i}(x_i) + a_{ij,j}\nu_{x_i}(x_j)+b_{ij}.
\end{align*}

We obtain the equations 
\begin{align*}
     b_{ij}(q_{ij}-1)-a_{ij,i}a_{ij,j} =  0, \text{\rm 
 for } 1 \leq i<j \leq n.
\end{align*}

Similarly, the map $\nu_{x_j}$ can be extended to an algebra homomorphism of $A$ if and only if the equalities
\begin{align*}
     b_{ij}(q_{ij}-1)-a_{ij,i}a_{ij,j} =  0, \text{\rm 
 for } 1 \leq i<j \leq n.
\end{align*}

are satisfied. 

Now, let $1\leq k \leq n$ and we consider the extension of the map $\nu_{x_k}$ to an algebra homomorphism of $A$. Here, we must consider three cases:
\begin{itemize}
    \item $i<k<j$. In this case, the map $\nu_{x_k}$ can be extended to an algebra homomorphism of $A$ if and only if the equalities
    \begin{align*}
        a_{ij,j}(1-q_{ik})= &\ a_{ik,k}(1-q_{ij}), \\
        a_{ij,j}a_{kj,j}+b_{kj}(q_{kj}-q_{ij})= &\ 0  \text{ and } \\
        b_{ij}(q_{ik}-q_{kj})+a_{kj,k}a_{ij,j}q_{ik}-q_{kj}a_{ik,k}a_{ij,i}= &\ 0, 
    \end{align*}
    
    are satisfied.
    \item $k<i<j$. Now, the map $\nu_{x_k}$ can be extended to an algebra homomorphism of $A$ if and only if the equalities
    \begin{align*}
         a_{ij,i}(1-q_{kj})= &\ a_{kj,k}(1-q_{ij}), \\
        a_{ij,j}(1-q_{ki})= &\ a_{ki,k}(1-q_{ij}) \text{ and }  \\
        b_{ij}(1-q_{ki}q_{kj})+a_{ki,k}a_{ij,i}+q_{ki}a_{ij,j}a_{kj,k}= &\ 0, 
    \end{align*}
    
    hold.
    
    \item $i<j<k$. In this last case, the map $\nu_{x_k}$ can be extended to an algebra homomorphism of $A$ if and only if the equalities
    \begin{align*}
         a_{jk,k}(1-q_{ij})= &\ a_{ij,i}(1-q_{jk}), \\
         a_{ik,k}(1-q_{ij})= &\ a_{ij,j}(1-q_{ik}) \text{ and }  \\
         b_{ij}(1-q_{ik}q_{jk})+a_{ik,k}a_{ij,i}+q_{ik}a_{ij,j}a_{jk,k}= &\ 0, 
    \end{align*}
    
    are satisfied.
\end{itemize}

Since we need to guarantee that
\begin{align}
   \nu_{x_i} \circ \nu_{x_j} = &\ \nu_{x_j} \circ \nu_{x_i}, \label{compij} \\
   \nu_{x_i} \circ \nu_{x_k} = &\  \nu_{x_k} \circ \nu_{x_i}, \label{compik} \quad {\rm and} \\
   \nu_{x_j} \circ \nu_{x_k} = &\ \nu_{x_k} \circ \nu_{x_j}, \label{compjk} 
\end{align}

for $1\leq i,j,k \leq n$, $i<j$, it is enough to satisfy these equalities for the generators $x_i$, $x_j$ and $x_k$. By using {\rm (}\ref{compij}{\rm )}, we have that 
\begin{align}
\nu_{x_i} \circ \nu_{x_j}(x_i) = &\ q_{ij}x_i+a_{ij,j}, \\
\nu_{x_j} \circ \nu_{x_i}(x_i) = &\ q_{ij}x_i+a_{ij,j}, \\ 
\nu_{x_i} \circ \nu_{x_j}(x_j) = &\ q_{ij}^{-1}(x_j-a_{ij,i}), \\
\nu_{x_j} \circ \nu_{x_i}(x_j) = &\ q_{ij}^{-1}(x_j-a_{ij,i}), \\
\nu_{x_i} \circ \nu_{x_j}(x_k) = &\ q_{kj}q_{ik}^{-1}(x_k-a_{ik,i})+a_{kj,j}, \label{relijk11}\text{ for } i<k<j, \\
\nu_{x_j} \circ \nu_{x_i}(x_k) = &\ q_{ik}^{-1}(q_{kj}x_k+a_{kj,j}-a_{ik,i}), \label{relijk12} \text{ for } i<k<j, \\
\nu_{x_i} \circ \nu_{x_j}(x_k) = &\ q_{kj}(q_{ki}x_k+a_{ki,i})+a_{kj,j}, \label{relijk21}\text{ for } k<i<j, \\
\nu_{x_j} \circ \nu_{x_i}(x_k) = &\ q_{ki}(q_{kj}x_k+a_{kj,j})+a_{ki,i}, \label{relijk22} \text{ for } k<i<j, \\
\nu_{x_i} \circ \nu_{x_j}(x_k) = &\ q_{jk}^{-1}(q_{ik}^{-1}(x_k-a_{ik,i})-a_{jk,j}), \label{relijk31} \text{ for } i<j<k, \text{ and } \\
\nu_{x_j} \circ \nu_{x_i}(x_k) = &\ q_{ik}^{-1}(q_{jk}^{-1}(x_k-a_{jk,j})-a_{ik,i}), \label{relijk32} \text{ for } i<j<k.
\end{align}

As we can see, composition $\nu_{x_i}\circ\nu_{x_j}=\nu_{x_j}\circ\nu_{x_i}$ for the generators $x_i$ and $x_j$. Relations {\rm (}\ref{relijk11}{\rm )} and {\rm (}\ref{relijk12}{\rm )} coincide when $a_{kj,j}(1-q_{ik})=  a_{ik,i}(1-q_{kj})$; this is exactly the condition {\rm (}\ref{condijk11}{\rm )}. Now, relations {\rm (}\ref{relijk21}{\rm )} and {\rm (}\ref{relijk22}{\rm )} are equal if $a_{ki,i}(1-q_{kj})=  a_{kj,j}(1-q_{ki})$. This holds due to the condition {\rm (}\ref{condijk21}{\rm )}. Also, relations {\rm (}\ref{relijk31}{\rm )} and {\rm (}\ref{relijk32}{\rm )} coincide when $ a_{ik,i}(1-q_{jk})= a_{jk,j}(1-q_{ik})$. This is true because of the condition {\rm (}\ref{condijk31}{\rm )}.

Next, 
\begin{align}
    \nu_{x_i} \circ \nu_{x_k}(x_i) = &\  \nu_{x_i}( \nu_{x_k}(x_i)),\label{relikk11}\text{ for all } 1\leq k \leq n, \\
    \nu_{x_k} \circ \nu_{x_i}(x_i) = &\  \nu_{x_k}(x_i), \label{relikk12} \text{ for all } 1\leq k \leq n, \\ 
    \nu_{x_i} \circ \nu_{x_k}(x_j) = &\ q_{kj}^{-1}(q_{ij}^{-1}(x_j-a_{ij,i})-a_{kj,k}),\label{relikk21} \text{ for } k<j, \\ 
    \nu_{x_k} \circ \nu_{x_i}(x_j) = &\ q_{ij}^{-1}(q_{kj}^{-1}(x_j-a_{kj,j})-a_{ij,i}),\label{relikk22} \text{ for } k<j, \\ 
    \nu_{x_i} \circ \nu_{x_k}(x_j) = &\ q_{jk}q_{ij}^{-1}(x_j-a_{ij,i})+a_{jk,k},\label{relikk41} \text{ for } j<k, \\ 
    \nu_{x_k} \circ \nu_{x_i}(x_j) = &\ q_{ij}^{-1}(q_{jk}x_j+a_{jk,k}-a_{ij,i}),\label{relikk42} \text{ for } j<k, \\
    \nu_{x_i} \circ \nu_{x_k}(x_k) = &\ \nu_{x_i}(x_k), \label{relikk51}\text{ for all } 1\leq k \leq n, \\
    \nu_{x_k} \circ \nu_{x_i}(x_k) = &\ \nu_{x_k}(\nu_{x_i}(x_k)), \label{relikk52} \text{ for all } 1\leq k \leq n.
\end{align}

Again, the compositions shown in {\rm (}\ref{relikk11}{\rm )} and {\rm (}\ref{relikk12}{\rm )} are equal regardless of the value of $k$ since $\nu_{x_i}$ acts as the identity on $x_i$ and $\nu_{x_k}$ is linear. 

Relations {\rm (}\ref{relikk21}{\rm )} and {\rm (}\ref{relikk22}{\rm )} hold if $a_{kj,k}(1-q_{ij})=a_{ij,i}(1-q_{kj})$; and relations {\rm (}\ref{relikk41}{\rm )} and {\rm (}\ref{relikk42}{\rm )} are equal when $a_{jk,k}(1-q_{ij})=a_{ij,i}(1-q_{jk})$.

The compositions shown in {\rm (}\ref{relikk51}{\rm )} and {\rm (}\ref{relikk52}{\rm )} coincide for the same reason of the composition of $\nu_{x_i} \circ \nu_{x_k}(x_i) =\nu_{x_k} \circ \nu_{x_i}(x_i)$.

Finally, 
\begin{align}
    \nu_{x_j} \circ \nu_{x_k}(x_i) = &\ q_{ki}^{-1}(q_{ij}x_i+a_{ij,j}-a_{ki,k}) ,\label{reljkk11}\text{ for } k<i, \\
    \nu_{x_k} \circ \nu_{x_j}(x_i) = &\ q_{ij}q_{ki}^{-1}(x_i-a_{ki,k})+a_{ij,j} , \label{reljkk12}\text{ for } k<i,  \\
    \nu_{x_j} \circ \nu_{x_k}(x_i) = &\ q_{ik}(q_{ij}x_i+a_{ij,j})+a_{ik,k},\label{reljkk41} \text{ for } i<k, \\ 
    \nu_{x_k} \circ \nu_{x_j}(x_i) = &\ q_{ij}(q_{ik}x_i+a_{ik,k})+a_{ij,j},\label{reljkk42} \text{ for } i<k, \\
    \nu_{x_j} \circ \nu_{x_k}(x_j) = &\ \nu_{x_j}( \nu_{x_k}(x_j)),\label{reljkk21}\text{ for all } 1\leq k \leq n, \\ 
    \nu_{x_k} \circ \nu_{x_j}(x_j) = &\ \nu_{x_k}(x_j),\label{reljkk22}\text{ for all } 1\leq k \leq n, \\ 
    \nu_{x_j} \circ \nu_{x_k}(x_k) = &\ \nu_{x_j}(x_k), \label{reljkk51}\text{ for all } 1\leq k \leq n, \\
    \nu_{x_k} \circ \nu_{x_j}(x_k) = &\ \nu_{x_k}(\nu_{x_j}(x_k)), \label{reljkk52} \text{ for all } 1\leq k \leq n.
\end{align}

Relations {\rm (}\ref{reljkk11}{\rm )} and {\rm (}\ref{reljkk12}{\rm )} coincide $a_{ki,k}(1-q_{ij})=a_{ij,j}(1-q_{ki})$; and relations {\rm (}\ref{reljkk41}{\rm )} and {\rm (}\ref{reljkk42}{\rm )} are equal if $a_{ik,k}(1-q_{ij})=a_{ij,j}(1-q_{ik})$.

Now, the compositions shown in {\rm (}\ref{reljkk21}{\rm )} and {\rm (}\ref{reljkk22}{\rm )} are equal regardless of the value of $k$, since $\nu_{x_j}$ acts as the identity on $x_i$ and $\nu_{x_k}$ is linear. And the same with relations {\rm (}\ref{reljkk51}{\rm )} and {\rm (}\ref{reljkk52}{\rm )}.

Consider $\Omega^{1}A$ a free right $A$-module of rank $n$ with generators $dx_i$, $1 \leq i \leq n$. For all $p\in A$ define a left $A$-module structure by
\begin{align}
    pdx_i = &\ dx_i\nu_{x_i}(p) \label{relrightmod}.
\end{align}

The relations in $\Omega^{1}A$ are given by 
\begin{align}
x_idx_i = &\ dx_i x_i, \notag \\
x_idx_j = &\ q_{ij}dx_jx_i+a_{ij,j}dx_j, \text{ and } \label{rel1}  \\
x_jdx_i = &\ q_{ij}^{-1}dx_ix_j-q_{ij}^{-1}a_{ij,j}dx_i,  \notag
\end{align}
for $1\leq i < j \leq n$.

We want to extend the correspondences 
\begin{equation*}
x_i \mapsto d x_i,
\end{equation*} 

to a map $d: A \to \Omega^{1} A$ satisfying the Leibniz's rule, for $1\leq i \leq n $. This is possible if it is compatible with the nontrivial relations {\rm (}\ref{Bavula2023(1)}{\rm )} and {\rm (}\ref{Bavula2023(2)}{\rm )}, i.e. if the equalities
\begin{align*}
        dx_ix_j+x_idx_j &\ = q_{ij}dx_jx_i+q_{ij}x_jdx_i+a_{ij,i}dx_i+a_{ij,j}x_j, \\
\end{align*}

hold for $1\leq i \leq n$. Note that $d(b_{ij})=0$ for all $1\leq i < j \leq n$.

Define $\Bbbk$-linear maps 
\begin{equation*}
\partial_{x_i}: A \rightarrow A
\end{equation*}

such that
\begin{align*}
    d(a)=\sum_{k=1}^{n}dx_k\partial_{x_k}(a), \quad {\rm for\ all} \ a \in A.
\end{align*}

Since $dx_i$ are free generators of the right $A$-module $\Omega^1A$, for $1\leq i \leq n$, these maps are well-defined. Note that $d(a)=0$ if and only if $\partial_{x_i}(a)=0$, for $1\leq i \leq n$. By using the relation in {\rm (}\ref{relrightmod}{\rm )} and the definitions of the maps $\nu_{x_i}$ and $\nu_{x_j}$, we get that
\begin{align}
\partial_{x_k}(x_1^{l_1}\cdots x_n^{l_n}) =l_k\prod_{r=1}^{k-1}(q_{rk}x_r+a_{rk,k})^{l_r}x_k^{l_k-1}x_{k+1}^{l_{k+1}}\cdots x_n^{l_n} &\ ,
\end{align}

where $1\leq k \leq n$ and $l_1, \ldots, l_n \in \mathbb{N}$.

In this way, $d(a)=0$ if and only if $a$ is a scalar multiple of the identity. This shows that $(\Omega A,d)$ is connected where $\Omega A = \bigoplus_{r=0}^{n-1} \Omega^r A$.

The universal extension of $d$ to higher forms compatible with {\rm (}\ref{rel1}{\rm )} gives the following rules for $\Omega^lA$  $(2\leq l\leq n-1)$:
\begin{align}
\bigwedge_{k=1}^{l}dx_{q(k)} = &\ (-1)^{\sharp}\prod_{(s,t)\in P}q_{st}^{-1}\bigwedge_{k=1}^ldx_{p(k)}
\end{align}

where 
$$
q:\{1,\ldots,l\}\rightarrow \{1,\ldots,n\}
$$ 

is an injective map and 
$$
p:\{1,\ldots,l\}\rightarrow \text{Im}(q)$$

is an increasing injective map and $\sharp$ is the number of $2$-permutations needed to transform $q$ into $p$, and 
$$
P := \{(s, t) \in \{1, \ldots, l\} \times \{1, \ldots, l\} \mid q(s) >  q(t)\}.
$$

Since the automorphisms $\nu_{x_i}$, $1\leq i \leq n$ commute with each other, there are no additional relations to the previous ones, so we get that
\begin{align*}
\Omega^{n-1}A = &\ \left[\bigoplus_{r=1}^{n-1}dx_1\wedge \cdots dx_{r-1}\wedge dx_{r+1} \wedge \cdots \wedge dx_{n} \right]A.
\end{align*}

Since $\Omega^nA = \omega A\cong A$ as a right and left $A$-module, with $\omega=dx_1\wedge \cdots \wedge dx_n$, where $\nu_{\omega}=\nu_{x_1}\circ\cdots\circ\nu_{x_n}$, we have that $\omega$ is a volume form of $A$. From Proposition \ref{BrzezinskiSitarz2017Lemmas2.6and2.7} (2) we get that $\omega$ is an integral form by setting
\begin{align*}
    \omega_i^j = &\ \bigwedge_{k=1}^{j}dx_{p_{i,j}(k)}, \text{ for } 1\leq i \leq \binom{n}{j-1}, \\
    \bar{\omega}_i^{n-j} = &\ (-1)^{\sharp_{i,j}}\prod_{(r,s)\in P_{i,j}}q_{rs}^{-1}\bigwedge_{k=j+1}^{n}dx_{\bar{p}_{i,j}(k)}, \text{ for } 1\leq i \leq \binom{n}{j-1},
\end{align*}
for $1\leq j \leq n$ and where 
\begin{align*}
    p_{i,j}:\{1,\ldots,j\}\rightarrow &\ \{1,\ldots,n\}, \quad {\rm and} \\
\bar{p}_{i,j}:\{j+1,\ldots,n\}\rightarrow &\ (\text{Im}(p_{i,j}))^c
\end{align*}

(the symbol $\square^c$ denotes the complement of the set $\square$), are increasing injective maps, and $\sharp_{i,j}$ is the number of $2$-permutation needed to transform 
\[
\left\{\bar{p}_{i,j}(j+1),\ldots, \bar{p}_{i,j}(n), p_{i,j}(1), \ldots, p_{i,j}(j)\right\} \quad {\rm into\ the\ set} \quad \{1, \ldots, n\},
\]

and 
\[
P_{i,j} :=\{(r, s) \in \{1, \ldots, j\}\times\{j+1, \ldots, n\} \mid p_{i,j}(r)< \bar{p}_{i,j}(s)\}.
\]

Consider $\omega' \in \Omega^jA$, that is,  
\begin{align*}
\omega' =\sum_{i=1}^{\binom{n}{j-1}}\bigwedge_{k=1}^{j}dx_{p_{i,j}(k)}b_i, \quad {\rm with} \  n_i \in \Bbbk.
\end{align*}

Then
\begin{align*}
 \sum_{i=1}^{\binom{n}{j-1}}\omega_{i}^{j}\pi_{\omega}(\bar{\omega}_i^{n-j}\wedge \omega') = &\ \sum_{i=1}^{\binom{n}{j-1}}\left[\bigwedge_{k=1}^{j}dx_{p_i(k)}\right] \cdot  \pi_{\omega} \left[(-1)^{\sharp_{i,j}} \square^{*} \wedge \omega'\right] \\
 = &\ \displaystyle \sum_{i=1}^{\binom{n}{j-1}}\bigwedge_{k=1}^{j}dx_{p_{i,j}(k)}n_i =  \omega',
\end{align*}

where 
\begin{align*}
    \square^{*} := &\ \prod_{(r,s)\in P_{i,j}}q_{rs}^{-1} \bigwedge_{k=j+1}^{n}dx_{\bar{p}_{i,j}(k)}.
\end{align*}

By Proposition \ref{BrzezinskiSitarz2017Lemmas2.6and2.7} (2), it follows that $A$ is differentially smooth.
\end{proof}

We conclude the paper with the following result.

\begin{theorem}\label{Secondtheoremsmoothnessbi-quadraticalgebras}
If there exists $a_{ij,k}\not=0$ for some $1\leq i<j\leq n$, $k\ne i,j$, then $A$ is not differentially smooth.
\end{theorem}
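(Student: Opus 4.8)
The plan is to argue by contradiction, closely mirroring the strategy of Proposition \ref{Secondtheoremsmoothnessbi-3quadraticalgebras} (the three-generator case), which is the special case $n=3$. Suppose $A$ admits an $n$-dimensional connected integrable differential calculus $(\Omega A, d)$; since $\mathrm{GKdim}(A) = n$, the dimension is forced. The first step is to exploit the density condition together with the fact that $\Omega^1 A$ must be finitely generated projective over $A$ to constrain the structure of $\Omega^1 A$: because $A$ has a PBW basis with generators $x_1, \ldots, x_n$, the module $\Omega^1 A$ is generated by the elements $dx_1, \ldots, dx_n$, and connectedness forces $d$ to be determined by partial-derivative-type maps $\partial_{x_k}$ as in the proof of Theorem \ref{Firsttheoremsmoothnessbi-quadraticalgebras}. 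The key point is that the left $A$-module structure on $\Omega^1 A$, dictated by the commutation relations \eqref{Bavula2023(1)}--\eqref{Bavula2023(2)} applied under $d$ and the Leibniz rule, must be \emph{consistent}: applying $d$ to the relation \eqref{Bavula2023(2)}, namely $x_i x_j - q_{ij} x_j x_i = \sum_k a_{ij,k} x_k + b_{ij}$, yields an identity in $\Omega^1 A$ in which the term $\sum_k a_{ij,k}\, dx_k$ appears on the right-hand side.

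The second and central step is to show that the presence of a nonzero $a_{ij,k}$ with $k \neq i,j$ obstructs the existence of an integrating volume form, equivalently (by Proposition \ref{integrableequiva}) the bijectivity of the left multiplication maps $\ell_{\pi_\omega}^k$. The mechanism is that $d(x_i x_j - q_{ij} x_j x_i)$ forces a relation among $dx_i, dx_j, dx_k$ that does not respect the "diagonal" form \eqref{rel1} needed for $\Omega^k A$ to be free of the expected rank $\binom{n}{k}$; concretely, the would-be volume form $\omega = dx_1 \wedge \cdots \wedge dx_n$ either vanishes or fails to freely generate $\Omega^n A$ on one side. I would make this precise by computing $x_\ell \cdot dx_i$ and $x_\ell \cdot dx_j$ for a suitable index $\ell$ and checking that the candidate automorphism $\nu_\omega$ of equation \eqref{BrzezinskiSitarz2017(2.2)} cannot be well-defined as an algebra endomorphism — exactly the way the compatibility equations were derived in the proof of Theorem \ref{Firsttheoremsmoothnessbi-quadraticalgebras}, but now run in reverse to produce a contradiction. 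An alternative, and perhaps cleaner, route is to localize: pass to a suitable Ore localization or quotient of $A$ in which $x_k$ becomes invertible or is killed, reducing to a lower-dimensional algebra of the form $\mathbb{C}[x,y]/\langle xy\rangle$ as in Example \ref{examplesDSalgebrasBre}, which is known not to be differentially smooth; one must check that differential smoothness is inherited appropriately under this reduction.

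The third step is bookkeeping: since the hypothesis only asserts the existence of \emph{one} bad coefficient $a_{ij,k} \neq 0$ (with $i < j$, $k \neq i,j$), I would fix that triple and note that the obstruction is local to the three generators $x_i, x_j, x_k$, so the argument reduces to the three-generator sub-situation governed by Proposition \ref{Secondtheoremsmoothnessbi-3quadraticalgebras}, after observing that the $c$-type coefficient there is precisely the off-support coefficient $a_{ij,k}$ in the present notation (the relation \eqref{Bavula2023(8)} has $cx_3$ with $3 \notin \{1,2\}$, matching $a_{12,3} = c$). One should take care that the PBW hypothesis is genuinely used — Bavula's consistency conditions from Proposition \ref{Bavula2023Theorem1.3} (and their $n$-variable analogues) must hold — so that the sub-algebra generated by $x_i, x_j, x_k$ is itself a well-defined bi-quadratic algebra with PBW basis and the three-generator result applies verbatim.

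\textbf{Main obstacle.} The hard part will be making rigorous the claim that differential smoothness (an $n$-dimensional notion) is obstructed by a purely "local" defect involving only three of the generators: unlike the constructive direction of Theorem \ref{Firsttheoremsmoothnessbi-quadraticalgebras}, where one simply exhibits the calculus, here one must rule out \emph{all} possible $n$-dimensional connected integrable calculi, not just the natural candidate built from $dx_1, \ldots, dx_n$. Showing that any such calculus, after suitable restriction or localization, would induce a forbidden calculus on the three-generator sub-quotient — and that no exotic choice of $\Omega^1 A$ circumvents this — is the delicate point, and it is presumably where the argument of \cite{RubianoReyes2024DSBiquadraticAlgebras} is invoked and extended.
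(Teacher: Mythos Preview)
Your proposal contains the essential germ of the paper's argument---applying $d$ to the defining relation \eqref{Bavula2023(2)} and invoking the Leibniz rule---but you then abandon it for much more elaborate machinery that the paper never needs. The paper's proof is four lines: for \emph{any} differential calculus $(\Omega A,d)$ on $A$, the density condition forces $\Omega^1 A$ to be generated as an $A$-bimodule by $dx_1,\ldots,dx_n$, and applying $d$ to $x_ix_j - q_{ij}x_jx_i = \sum_k a_{ij,k}x_k + b_{ij}$ yields
\[
dx_i\,x_j + x_i\,dx_j - q_{ij}\,dx_j\,x_i - q_{ij}\,x_j\,dx_i = a_{ij,i}\,dx_i + a_{ij,j}\,dx_j + a_{ij,k}\,dx_k,
\]
which (since $a_{ij,k}\neq 0$) expresses $dx_k$ in the sub-bimodule generated by $dx_i$ and $dx_j$. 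Hence $\Omega^1 A$ has at most $n-1$ bimodule generators, so $\Omega^n A = 0$, and no $n$-dimensional calculus exists.

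Your ``main obstacle''---ruling out all possible calculi rather than just the natural one---is a phantom: the argument above uses only the Leibniz rule and the density condition, which are part of the \emph{definition} of a differential calculus over $A$, so it applies uniformly to every calculus. There is no need to analyze integrating volume forms, the automorphism $\nu_\omega$, or the maps $\ell_{\pi_\omega}^k$; nor is there any need for localization, quotients, or reduction to the three-generator sub-algebra (which, incidentally, is not obviously a sub-algebra with PBW basis in its own right, so that route would require separate justification). The point is that the obstruction is visible already at the level of $\Omega^1 A$ being too small, before any integrability enters.
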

\begin{proof}
By contradiction. Suppose that $A$ has a first order differential calculus $(\Omega A, d)$. We consider $a_{ij,k}\not =0$. Since $d$ must be compatible with the relation {\rm (}\ref{Bavula2023(2)}{\rm )}, then we get that
\begin{equation*}
dx_ix_j+x_idx_j=q_{ij}dx_jx_i+q_{ij}x_jdx_i+a_{ij,i}dx_i+a_{ij,j}dx_j+a_{ij,k}dx_k,
\end{equation*}

whence $dx_k$ is generated by $dx_i$ and $dx_j$. This means that $\Omega^1A$ is generated by $n-1$ elements and $\Omega^nA=\bigwedge_{k=1}^{n}\Omega^1A=0$, i.e. there is no $n$-order calculus. Since ${\rm GKdim}(A) = n$, we conclude that $A$ cannot be differentially smooth.
\end{proof}

\begin{remark}
The conditions appearing in Theorem \ref{Firsttheoremsmoothnessbi-quadraticalgebras} generalize those corresponding formulated in Proposition \ref{Firsttheoremsmoothnessbi-3quadraticalgebras}.
\end{remark}

\begin{remark}
It is important to note that if one algebra is not differential smooth, this does not mean that the algebra does not possess a differential calculus. The universal enveloping algebra $U(\mathfrak{sl}_2(\Bbbk))$ is an illustration of this fact as it was shown by Beggs and Majid \cite{BeggsMajid2020}. By using Hopf algebras, they proved the existence of a first-order differential calculus and other properties of Riemannian quantum geometry that are different from the differential smoothness shown in this paper.
\end{remark}

\section{Examples}\label{Examplesdifferentiallysmoothngenerators}

The following algebras are differentially smooth.

\begin{itemize}
    \item \emph{Commutative polynomial ring}: (Example \ref{examplesDSalgebrasBre}) $\Bbbk[x_1, \ldots, x_n]$ is a bi-quadratic algebra, where $q_{ij}=1$, $a_{ij,k}=b_{ij}=0$,  for $1\leq i<j\leq n$ and $1\leq k\leq n$.
    
    \item \emph{Quantum plane} (Section \ref{DSBiquadraticalgebrasTwogenerators}): In $\mathcal{O}_q(\Bbbk^2)$ we have the well-known relation $xy=qyx$ ($q\in \Bbbk^{*}$), with $q_{12} = q$ and $a_{12,1}=a_{12,2}=b_{12}=0$.
    
    \item The $n${\em th Weyl algebra}: $A_n(\Bbbk)$ is the $\Bbbk$-algebra generated by the $2n$ indeterminates $x_1,\dotsc, x_n, y_1,\dotsc, y_n$ where
    {\normalsize{\begin{align*}
    x_jx_i = &\ x_ix_j,\ \ \ \ \ y_jy_i = y_iy_j,\ \ \ \ \ \ \ \ \ \ \ \ \ \ \ \ \ 1\le i < j\le n,\notag \\
    y_jx_i = &\ x_iy_j + \delta_{ij},\ \ \ \ \delta_{ij}\ {\rm is\ the\ Kronecker}' {\rm s\
    delta},\ \ \ 1\le i, j \le n.
    \end{align*}}}
    
    If we identify $y_i=x_{n+i}$ for $1\leq i \leq n$, then  
    \begin{align*}
        q_{ij} = &\ 1, \quad 1\leq i<j<2n, \\
        a_{ij,k} = &\ 0, \quad 1\leq i,j,k \leq 2n, i<j, \\
        b_{ij} = &\ 0, j\ne n+i, \\
        b_{i(n+i)} = &\ 1, \quad 1\leq i \leq n.
    \end{align*}
    
    \item {\em Multiplicative analogue of the Weyl algebra}: $\mathcal{O}_{n}(\lambda_{ij})$ is generated over $\Bbbk$ by the indeterminates $x_1, \dotsc, x_n$ subject to the relations
    \[
    x_j x_i = \lambda_{ij} x_i x_j, \quad {\rm for}\ 1\le i < j \le n \ {\rm and}\ \lambda_{ij}\in \Bbbk^{*}.
    \]
    
    where $q_{ij}=\lambda_{ij}$, $a_{ij,k}=b_{ij}=0$ for $1\leq i<j\leq n$ and $1\leq k\leq n$.
    
    \item {\em Algebra of linear partial shift operators}: we consider the $\Bbbk$-algebra of linear partial shift operators $A = \Bbbk[t_1, \ldots, t_n][E_1, \ldots, E_m]$ subject to the relations
    \begin{align*}
        t_jt_i &\ =t_it_j, \quad 1\leq i<j  \leq n, \\
        E_it_i &\ = t_iE_i+E_i, \quad 1 \leq i  \leq n, \\
        E_jt_i &\ = t_iE_j, \quad i \ne j, \\
        E_jE_i &\ =E_iE_j, \quad 1 \leq i<j\leq m.
    \end{align*}
    
     If we identify the generators as
    \begin{align*}
        x_i &\ =t_i, \quad 1\leq i \leq n, \\
        x_{n+j} &\ = E_j, \quad 1\leq j \leq m,
    \end{align*}

    then 
     \begin{align*}
        q_{ij} = &\ 1, \quad 1\leq i<j<n+m, \\
        a_{ij,k} = &\ 0, \quad 1\leq i,j,k \leq 2n, i<j, j\ne m+i,  \\
        a_{i(m+i),k} = &\ 0, \text{ if } k\ne m+i, \\
        a_{i(m+i),m+i} = &\ 1, \quad 1\leq i \leq n, \\
        b_{ij} = &\ 0, \quad 1\leq i<j<n+m.
    \end{align*}
    
    \item {\em Algebra of linear partial difference operators}: we consider the $\Bbbk$-algebra of linear partial difference $B = \Bbbk[t_1, \ldots, t_n][\Delta_1, \ldots, \Delta_m]$ subject to the relations
    \begin{align*}
        t_jt_i &\ =t_it_j, \quad 1\leq i<j  \leq n, \\
        \Delta_it_i &\ = t_i\Delta_i+ \Delta_i+1, \quad 1 \leq i  \leq n, \\
        \Delta_jt_i &\ = t_i\Delta_j, \quad i \ne j, \\
        \Delta_j\Delta_i &\ =\Delta_i\Delta_j, \quad 1 \leq i<j\leq m.
    \end{align*}
    
    If we identify the generators as the algebra of linear partial shift operators above, then we get that
    \begin{align*}
        b_{ij} &\ =0, \text{ if } j\ne m+i, \\b_{i(m+i)} &\ =1, \quad 1\leq i \leq n.
    \end{align*}
\end{itemize}

The following algebras satisfy the conditions formulated in Theorem \ref{Secondtheoremsmoothnessbi-quadraticalgebras}, so that all are not differentially smooth.
\begin{itemize}
\item {\em $q$-Heisenberg algebra}: {\bf H}$_n(q)$ is the $\Bbbk$-algebra generated over $\Bbbk$ by the indeterminates $x_i, y_i, z_i$, for $1\le i\le n$, subject to the relations 
{\normalsize{\begin{align*}
    x_ix_j = &\ x_jx_i,\ \ \ y_iy_j = y_jy_i,\ \ \ z_jz_i = z_iz_j, &\ 1\le i < j\le n,\notag \\
    x_i z_i  - qz_ix_i = &\ z_iy_i - qy_iz_i = x_iy_i - q^{-1}y_ix_i + z_i = 0, &\ 1\le i\le n, \notag \\
    x_iy_j = &\ y_jx_i,\ \ \ x_iz_j = z_jx_i,\ \ \ y_iz_j = z_jy_i, &\ i\neq j.
\end{align*}}}
    
If $y_i := x_{n+i}$, $z_i := x_{2n+i}$ with $1 \leq i \leq n$, then we get that
\begin{align*}
x_ix_{n+i}=q^{-1}x_{n+i}x_i-x_{2n+i}, \quad 1\leq i \leq n.
\end{align*}

In this way, $a_{i(n+i),2n+1}=-1$ for $1 \leq i \leq n$. 

\item In Example \ref{Examplesbi-quadraticthreegenerators} (c), we have that $a_{12,3} = -q^{-1/2}$.

\item {\em Askey-Wilson algebra} (Example \ref{Examplesbi-quadraticthreegenerators} (d)): If we identify $K_0=x_1$, $K_1 = x_2$ and $K_2 = x_3$, then $a_{12,3}=e^{-\omega}$.

\item {\em Dispin algebra}: $U(\mathfrak{osp}(1, 2))$. This $\Bbbk$-algebra is generated by the indeterminates $x_1, x_2$ and $x_3$ subject to the relations
\[
    x_1 x_2 - x_2 x_1 = 1,\quad x_3x_1 + x_1 x_3 = x_2 \quad {\rm and}\quad x_2x_3 - x_3x_2 = x_3,
\]

whence $a_{13,2} = 1$.

\item  Let $U(\mathfrak{sl}(2, \Bbbk))$ be the universal enveloping algebra of $\mathfrak{sl}(2, \Bbbk)$. By definition, this $\Bbbk$-algebra is generated by the indeterminates $x, y, z$ subject to the relations
\[
    xy - yx = z,\quad xz - zx = -2x \quad {\rm and}\quad yz - zy = 2y.
\]

As it is clear, $a_{12,3} = 1$ where $x=x_1$, $y=x_2$ and $z=x_3$.

\item  Let $U(\mathfrak{so}(3, \Bbbk))$ be the universal enveloping algebra of $\mathfrak{so}(3, \Bbbk)$, that is, the $\Bbbk$-algebra generated by the indeterminates $x, y, z$ subject to the relations
\[
    xy - yx = z, \quad xz - zx = -y \quad {\rm and} \quad yz - zy = x.
\]

As it can be seen, $a_{23,1}=1$ where $x=x_1$, $y=x_2$ and $z=x_3$.

\item The quantum universal enveloping algebra $U'(\mathfrak{so}(3, \Bbbk))$ with $q\in \Bbbk^{*}$ is defined as the $\Bbbk$-algebra generated by $x, y, z$ and relations
\[
    yx - qxy = -q^{1/2}z, \quad zx - q^{-1}xz = q^{-1/2} y \quad {\rm and} \quad zy - qyz = -q^{1/2} x.
\]

In this way, we have that $a_{12,3} = -q^{1/2}$, where $x=x_1$, $y=x_2$ and $z=x_3$. 

\item The {\em Woronowicz algebra}: $W_{v} (\mathfrak{sl}(2, \Bbbk))$, where $v \in \Bbbk - \{0\}$ is not a root of unity, is the $\Bbbk$-algebra generated by the indeterminates $x, y, z$ and relations given by
    \[
    xz - v^4 zx = (1 + v^2)x, \quad xy - v^2yx = v z \quad {\rm and}\quad zy - v^4yz = (1 + v^2)y. 
    \]

It follows that $a_{12,3}=v$, where $x=x_1$, $y=x_2$ and $z=x_3$.
\end{itemize}

\begin{remark}\label{CQWABavula2024}
The $3$-{\em cyclic quantum Weyl algebras} $A(\alpha, \beta, \gamma;q^2)$, $\alpha, \beta, \gamma, q^2 \in \Bbbk$, where $q^2$ is not a root of unity, investigated by Bavula \cite{Bavula2024} are defined as 
\begin{align*}
    xy= &\ q^2yx+\alpha , \\
    xz = &\ q^{-2}zx+\beta, \text{and} \\
    yz= &\ q^2zy+\gamma.
\end{align*}

In his paper, he studied their prime spectra and a classification of their simple modules. 

As we can see from the definition, if $x := x_1, y := x_2$ and $z := x_3$, then we obtain that 
\begin{gather}\label{conditions3CQWA}
    q_{12}=q^2, \ q_{13}=q^{-2}, \ q_{23}=q^2, \ b_{12}= \alpha, \ b_{13}=\beta, \ b_{23}= \gamma, \\
    a_{ij,k}=0,  \ i,j ,k \in \{1,2,3\}.
\end{gather}

whence by Proposition \ref{Bavula2023Theorem1.3} it follows that $A(\alpha, \beta, \gamma;q^2)$ are bi-quadractic algebras with PBW basis.

As can be seen, conditions \ref{conditions3CQWA} do not satisfy the assumptions established in Theorems \ref{Firsttheoremsmoothnessbi-quadraticalgebras} and \ref{Secondtheoremsmoothnessbi-quadraticalgebras}. Nevertheless, if we consider the family of automorphisms of $A(\alpha, \beta, \gamma)$ given by
\begin{align*}
   \nu_{x}(x) = &\ q^{-2}x, & \nu_{x}(y) = &\ q^{2}y, & \nu_{x}(x_3) = &\ q^{-2}z,  \\ 
    \nu_{y}(x) = &\ q^{-2}x, & \nu_{y}(y) = &\ q^{2}y, &  \nu_{y}(z) = &\ q^2z,  \\
    \nu_{z}(x) = &\ q^{2}x, & \nu_{z}(y) = &\ q^{-2}y, & \nu_{z}(z) = &\ q^{-2}z, 
\end{align*}

it is straightforward to see that these automorphisms satisfy Leibniz's rule and commute with each other. A similar argument to the presented above in the proof of Theorem \ref{Firsttheoremsmoothnessbi-quadraticalgebras} allows us to assert that the $3$-cyclic quantum Weyl algebras are differentially smooth.
\end{remark}

\section{Future work}\label{FutureworkDifferentialsmoothnessofbiquadraticalgebras}

Since bi-quadratic algebras are related to other families of noncommutative rings of polynomial type such as those mentioned in the Introduction, a second natural task is to investigate the extension of the theory developed here to these families of algebras with the aim of studying its differential smoothness.


\begin{thebibliography}{60}



\bibitem{ArtinSchelter1987} M. Artin and W. F. Schelter, Graded algebras of global dimension 3, {\em Adv. Math.} {\bf 66}(2) (1987) 171--216.



\bibitem{Bavula2023} V. V. Bavula, Description of bi-quadratic algebras on 3 generators with PBW basis, {\em J. Algebra} {\bf 631} (2023) 695--730.

\bibitem{Bavula2024} V. V. Bavula, The 3-cyclic quantum Weyl algebras, their prime spectra and a classification of simple modules ($q$ is not a root of unity), {\em J. Noncommut. Geom.} {\bf 18}(3) (2024) 1041--1079.

\bibitem{BavulaAlKhabyah2023} V. V. Bavula and A. Al Khabyah, Bi-quadratic algebras on 3 generators with PBW: class II.1 (2023) \url{https://arxiv.org/abs/2312.17182} 

\bibitem{BeggsBrzezinski2005} E. J. Beggs and T. Brzezi{\'n}ski, The Serre spectral sequence of a noncommutative fibration for de Rham cohomology, {\em Acta Math.} {\bf 195}(2) (2005) 155--196.

\bibitem{BeggsMajid2020} E. J. Beggs and S. Majid, {\em Quantum Riemannian Geometry}, Grundlehren der mathematischen Wissenschaften, Vol. 355 (Springer Cham,  Switzerland, 2020).

\bibitem{BellGoodearl1988} A. Bell and K. Goodearl, Uniform Rank Over Differential Operator Rings and Poincar{\'e}-Birkhoff-Witt Extensions, {\em Pacific J. Math.} {\bf 131}(1) (1988) 13--37.

\bibitem{BellSmith1990} A. D. Bell and S. P. Smith, Some 3-dimensional skew polynomial ring, University of Wisconsin, Milwaukee (1990).

\bibitem{Bellamyetal2016} G. Bellamy, D. Rogalski, T. Schedler, J. T. Stafford, and M. Wemyss, {\em Noncommutative Algebraic Geometry}, Mathematical Sciences Research Institute Publications, Vol. 64 (Cambridge University Press, New York, 2016).



\bibitem{Brzezinski2008} T. Brzezi{\'n}ski, Noncommutative Connections of The Second Kind, {\em J. Algebra Appl.} {\bf 7}(5) (2008) 557--573.

\bibitem{Brzezinski2011} T. Brzezi{\'n}ski, Divergences on Projective Modules and Noncommutative Integrals, {\em Int. J. Geom. Methods Mod. Phys.} {\bf 8}(4) (2011) 885--896.

\bibitem{Brzezinski2014} T. Brzezi{\'n}ski, On the Smoothness of the Noncommutative Pillow and Quantum Teardrops, {\em SIGMA Symmetry Integrability Geom. Methods Appl.} {\bf 10}(015) (2014) 1--8.

\bibitem{Brzezinski2015} T. Brzezi{\'n}ski, Differential smoothness of affine Hopf algebras of Gelfand-Kirillov of dimension two, {\em Colloq. Math.} {\bf 139}(1) (2015) 111--119.

\bibitem{Brzezinski2016}T. Brzezi\'nski, Noncommutative Differential Geometry of Generalized Weyl Algebras, {\em SIGMA Symmetry Integrability Geom. Methods Appl.} {\bf 12}(059) (2016) 1--18.



\bibitem{BrzezinskiElKaoutitLomp2010} T. Brzezi{\'n}ski, L. El. Kaoutit and C. Lomp, Noncommutative integral forms and twisted multi-derivations, {\em J. Noncommut. Geom.} {\bf 4}(2) (2010) 281--312.

\bibitem{BrzezinskiLomp2018} T. Brzezi{\'n}ski and C. Lomp, Differential smoothness of skew polynomial rings, {\em J. Pure Appl. Algebra} {\bf 222}(9) (2018) 2413--2426.

\bibitem{BrzezinskiSitarz2017}T. Brzezi{\'n}ski and A. Sitarz, Smooth geometry of the noncommutative pillow, cones and lens spaces, {\em J. Noncommut. Geom.} {\bf 11}(2) (2017) 413--449.


\bibitem{BuesoTorrecillasVerschoren2003}J. Bueso, J. G\'omez-Torrecillas and A. Verschoren, {\em Algorithmic Methods in noncommutative Algebra. Applications to Quantum Groups}, Mathematical Modelling: Theory and Applications (Springer Dordrecht, 2003).


\bibitem{Connes1994} A. Connes, {\em Noncommutative Geometry} (Academic Press, New York, 1994).

\bibitem{CuntzQuillen1995} J. Cuntz and D. Quillen, Algebra Extensions and Nonsingularity, {\em J. Amer. Math. Soc.} {\bf 8}(2) (1995) 251--289.

\bibitem{DuboisViolette1988} M. Dubois-Violette, D\'erivations et calcul diff\'erentiel non commutatif, {\em C. R. Acad. Sci. Paris, Ser. I} {\bf 307} (1988) 403--408.

\bibitem{DuboisVioletteKernerMadore1990} M. Dubois-Violette, R. Kerner and J. Madore, Noncommutative differential geometry of matrix algebras, {\em J. Math. Phys.} {\bf 31}(2) (1990) 316--322.

\bibitem{Fajardoetal2020} W. Fajardo, C. Gallego, O.  Lezama, A. Reyes, H. Su\'arez, and H. Venegas, {\em Skew PBW Extensions: Ring and Module-theoretic properties, Matrix and Gr\"obner Methods, and Applications}, Algebra and Applications, Vol. 28 (Springer Cham, Switzerland, 2020).

\bibitem{GallegoLezama2010} C. Gallego and O. Lezama, Gr{\"o}bner {B}ases for {I}deals of $\sigma$-{P}{B}{W} {E}xtensions, {\em Comm. Algebra} {\bf 39}(1) (2010) 50--75.

\bibitem{GelfandKirillov1966} I. M. Gelfand and A. A. Kirillov, On fields connected with the enveloping algebras of {L}ie algebras, {\em Dokl. Akad. Nauk} {\bf 167}(3) (1966) 503--505.

\bibitem{GelfandKirillov1966b} I. M. Gelfand and A. A. Kirillov, Sur les corps li\'es aux algèbres enveloppantes des algèbres de Lie, {\em Publ. Math. IHES} {\bf 31} (1966) 5--19.

\bibitem{Giachettaetal2005} G. Giachetta, L. Mangiarotti and G. Sardanashvily, {\em Geometric and {A}lgebraic {T}opological {M}ethods in {Q}uantum {M}echanics}, (World Scientific Publishing, Singapore, 2005).


\bibitem{Grothendieck1964} A. Grothendieck, \'El\'ements de g\'eom\'etrie alg\'ebrique, {\em Publ. Math. Inst. Hautes Etudes Sci.} {\bf 20} (1964) 5--251.

\bibitem{HavlicekKlimykPosta2000} M. Havl\'i\v{c}ek, A. U. Klimyk and S. Po\v{s}ta, Central elements of the algebras ${U}'(\mathfrak{so}_m)$ and ${U}(\mathfrak{iso}_m)$, {\em Czech. J. Phys.} {\bf 50}(1) (2000) 79--84.

\bibitem{Hinchcliffe2005} O. Hinchcliffe, Diffusion algebras, PhD Thesis, University of Sheffield, Sheffield, England (2005).

\bibitem{IsaevPyatovRittenberg2001} A. P. Isaev, P. N. Pyatov and V. Rittenberg, Diffusion algebras, {\em J. Phys. A} {\bf 34}(29) (2001) 5815--5834.




\bibitem{Karacuha2015} S. Kara\c cuha, Aspects of Noncommutative Differential Geometry, PhD Thesis, Universidade do Porto, Porto, Portugal (2015).

\bibitem{KaracuhaLomp2014} S. Kara\c cuha and C. Lomp, Integral calculus on quantum exterior algebras, {\em Int. J. Geom. Methods Mod. Phys.} {\bf 11}(04) (2014) 1450026.


\bibitem{Krahmer2012} U. Kr\"ahmer, On the Hochschild (co)homology of quantum homogeneous spaces, {\em Israel J. Math.} {\bf 189}(1) (2012) 237--266.

\bibitem{KrauseLenagan2000} G. R. Krause and T. H. Lenagan, {\em Growth of Algebras and Gelfand–Kirillov Dimension. Revised Edition}, Graduate Studies in Mathematics 22 (American Mathematical Society, 2000).



\bibitem{LezamaReyes2014} O. Lezama and A. Reyes, Some Homological Properties of Skew PBW Extensions, {\em Comm. Algebra} {\bf 42}(3) (2014) 1200--1230.

\bibitem{Levandovskyy2005} V. Levandovskyy, Non-commutative Computer Algebra for polynomial algebras: Gr\"obner bases, applications and implementation, Doctoral Thesis, Universit\"at Kaiserslautern, Kaiserslautern, Germany (2005).


\bibitem{Manin1997} Y. Manin, {\em Gauge Field Theory and Complex Geometry. Second Edition}, Grundlehren der mathematischen Wissenschaften, Vol. 289 (Springer Berlin, Heidelberg, 1997).

\bibitem{McConnellRobson2001} J. C. McConnell and J. C. Robson, {\em Noncommutative Noetherian Rings}, Graduate Studies in Mathematics, Vol. 30 (American Mathematical Society, United States of America, 2001).



\bibitem{Ore1933} O. Ore. Theory of Noncommutative Polynomials, {\em Ann. of Math. (2)} {\bf 34}(3) (1933) 480--508.

\bibitem{PyatovTwarock2002} P. N. Pyatov and R. Twarock, Construction of diffusion algebras, {\em J. Math. Phys.} {\bf 43}(6) (2002) 3268--3279.

\bibitem{RedmanPhDThesis1996} I. T. Redman, The noncommutative algebraic geometry of some skew polynomial algebras, PhD Thesis, University of Wisconsin - Milwaukee, Wisconsin, United States of America (1996).

\bibitem{Redman1999} I. T. Redman, The homogenization of the three dimensional skew polynomial algebras of type I, {\em Comm. Algebra} {\bf 27}(11) (1999) 5587--5602.

\bibitem{Reyes2013} A. Reyes, Gelfand-Kirillov dimension of skew PBW extensions, {\em Rev. Colombiana Mat.} {\bf 47}(1) (2013) 95--111.

\bibitem{ReyesSuarez2017} A. Reyes and H. Su\'arez, $\sigma$-PBW Extensions of Skew Armendariz Rings, {\em Adv. Appl. Clifford Algebras} {\bf 27}(4) (2017) 3197--3224.

\bibitem{ReyesSuarez20173D} A. Reyes and H. Su\'arez, PBW Bases for Some 3-Dimensional Skew Polynomial Algebras, {\em Far East J. Math. Sci.} {\bf 101}(6) (2017) 1207--1228.

\bibitem{ReyesSarmiento2022} A. Reyes and C. Sarmiento, On the differential smoothness of 3-dimensional skew polynomial algebras and diffusion algebras, {\em Internat. J. Algebra and Comput.} {\bf 32}(3) (2022) 529--559.

\bibitem{Rogalski2023} D. Rogalski, {\em Artin-Schelter regular algebras}, In K. A. Brown, T. J. Hodges, M. Vancliff, and J. J. Zhang, editors, {\em Recent Advances in Noncommutative Algebra and Geometry}, volume 801 of Conference in Honor of S. Paul Smith on the Occasion of His 65th Birthday. Recent Advances and New Directions in the Interplay of Noncommutative Algebra and Geometry June 20–24, 2022, University of Washington, Seattle, WA, pages 195–242. Contemporary Mathematics. American Mathematical Society (2024).

\bibitem{Rosenberg1995} A. L. Rosenberg, {\em Noncommutative Algebraic Geometry and Representations of Quantized Algebras}, Mathematics and Its Applications, Vol. 330 (Springer Dordrecht, 1995).

\bibitem{RubianoReyes2024DSBiquadraticAlgebras} A. Rubiano and A. Reyes. Smooth geometry of bi-quadratic algebras on three generators with PBW basis. (2024). \url{https://arxiv.org/abs/2408.16648}

\bibitem{RubianoReyes2024DSDoubleOreExtensions} A. Rubiano and A. Reyes,  Smooth geometry of double extension regular algebras of type (14641), (2024) \url{https://arxiv.org/abs/2409.10264}

\bibitem{RubianoReyes2024DSSPBWKt} A. Rubiano and  A. Reyes, Smooth geometry of skew PBW extensions over commutative polynomial rings I, (2024) \url{https://arxiv.org/abs/2409.18947}

\bibitem{RubianoReyes2024DSDifA} A. Rubiano and  A. Reyes, Smooth Geometry of Diffusion Algebras, (2024) \url{https://arxiv.org/abs/2410.12701}

\bibitem{RubianoReyes2024DSSBWR} A. Rubiano and  A. Reyes, A note on the differential smoothness of Skew PBW extensions, (2024) \url{	https://arxiv.org/abs/2410.13967}

\bibitem{Schelter1986} W. F. Schelter, Smooth {A}lgebras, {\em J. Algebra} {\bf 103}(2) (1986) 677--685.

\bibitem{SeilerBook2010} W. M. Seiler, {\em Involution. {T}he {F}ormal {T}heory of {D}ifferential {E}quations and its {A}pplications in {C}omputer {A}lgebra}, Algorithms and Computation in Mathematics (AACIM), Vol. 24 (Springer Berlin, Heidelberg, 2010).

\bibitem{StaffordZhang1994} J. T. Stafford and J. J. Zhang, Homological Properties of (Graded) Noetherian PI Rings, {\em J. Algebra} {\bf 168}(3) (1994) 988--1026.

\bibitem{VandenBergh1998} M. Van den Bergh, A relation between Hochschild homology and cohomology for Gorenstein rings, {\em Proc. Amer. Math. Soc.} {\bf 126}(5) (1998) 1345--1348. Erratum, ibid. {\bf 130}(9) (2002) 2809--2810.


\bibitem{Woronowicz1987} S. L. Woronowicz, Twisted $SU(2)$ {G}roup. An {E}xample of a {N}oncommutative {D}ifferential {C}alculus, {\em Publ. Res. Inst. Math. Sci.} {\bf 23}(1) (1987) 117--181.

\bibitem{ZhangZhang2008} J. J. Zhang and J. Zhang, Double Ore extensions, {\em J. Pure Appl. Algebra} {\bf 212}(12) (2008) 2668--2690.

\bibitem{ZhangZhang2009} J. J. Zhang and J. Zhang, Double extension regular algebras of type (14641), {\em J. Algebra} {\bf 322}(2) (2009) 373--409.

\bibitem{Zhedanov1991} A. S. Zhedanov, \textquotedblleft Hidden symmetry\textquotedblright\ of Askey-Wilson polynomials, {\em Theoret. and Math. Phys.} {\bf 89}(2) (1991) 1146--1157.

\end{thebibliography}
\end{document}